\documentclass{article}

\usepackage{hyperref}
\usepackage{amssymb, latexsym, amsthm, amsmath, verbatim,graphicx}
\newtheorem{definition}{Definition}
\newtheorem*{definition*}{Definition}
\newtheorem{theorem}{Theorem}
\newtheorem{prop}[theorem]{Proposition}
\newtheorem*{theorem*}{Theorem}
\newtheorem*{prop*}{Proposition}
\newtheorem*{lemma*}{Lemma}
\newtheorem{example}{Example}
\begin{document}

\title{%
Senior Thesis for Haverford College\\
\large Convex Optimization, Newton's Method and Interior Point Method}
\author{Haoqian Li}
\date{\today}

\maketitle
\begin{abstract}

This paper consists of four general parts: convex sets; convex functions; convex optimization; and the interior point algorithm. I will start by introducing the definition of convex sets and give three common convex set examples which will be used later in this paper, then prove the significant separating and supporting hyperplane theorems. Stepping into convex functions, in addition to offering definitions, I will also prove the first order and second order conditions for convexity of a function, and then introduce couple of examples that will be used in a convex optimization problem later. Next, I will provide the official definition of convex optimization problems and prove some characteristics they have, including the existence (through optimality criterion) and uniqueness of a solution. I will also generate two convex optimization problems, one of which cannot be simply solved and requires additional skills. Afterwards I will introduce duality, for the sake of constructing the interior-point method. In the last section, I will first present the descent method and the Newton's method, which serve as the foundation of the interior-point method. Then, I will show how to use logarithmic barrier function and central path to build up the interior-point method.
\end{abstract}
\section{Introduction}
Mathematical optimization is a branch of applied mathematics. It is the selection of a best element from some set of available alternatives subject to some constraints. It has a wide range of applications in various areas including military, industry, and management. In the simplest case, an optimization problem consists of maximizing or minimizing a real function by systematically choosing input values from within an allowed set and computing the value of the function. More generally, optimization includes finding ``best available" values of some objective function given a defined domain (or input), including a variety of different types of objective functions and different types of domains. With computer programming, some large-scale optimization problems could  be solved by a computer, which consequently makes optimization problems more applicable. Linear optimization is one such field, which has been well-developed and can be well-solved with the Interior Point Method, although there are nice algorithms for modest-sized programs such as the simplex algorithm. Convex optimization is another subfield of optimization that studies how to find the minimal value of a convex functions over convex sets. It is easier to solve comparing to the general optimization since local optimal must also be global optimal, and first-order conditions are sufficient conditions for optimality. In addition, lots of optimization problems in reality are actually treated as convex optimization problems through assuming or reformulating the constraint functions and the objective function to be convex. In this paper, I'd like to focus on convex optimization and the interior-point algorithm.\\

All of the definitions are exactly stated as the ones in \cite{Boyd}. The proof for each theorem is my own work, unless otherwise stated specifically.
\section{Convex Sets}
In this paper, since we mainly investigate convex optimization problems, which seek to minimize a convex object function over a convex set of constraints (will be formally defined later), we have to define a convex set and convex functions first. The following section plays with convex sets and in the next section we will look at convex functions.
\subsection{Affine Sets}
Let's first take a glance at affine sets, which are related to convex sets.
\begin{definition*} A set $C \subseteq \mathbb R^{n}$ is affine if the line through any two distinct points in $C$ lies in $C$. 
\end{definition*}
Since for two distinct points $x_1, x_2 \in \mathbb R^n$, the points on the line passing through $x_1$ and $x_2$ can be expressed as the set of all  $y=\theta x_1+(1-\theta)x_2$, where $\theta \in \mathbb R$, thus the definition is equivalent to: if $\forall x_1, x_2\in C$ and $\theta \in \mathbb R$, we have $\theta x_1 + (1-\theta)x_2 \in C$, then $C$ is affine. 
\subsection{Convex Sets}
\begin{definition*}A set $C$ is convex if the line segment between any two points in C lies in C, i.e., if $\forall x_1,x_2\in C$ and $0\leqslant \theta \leqslant 1$, we have $\theta x_1+(1-\theta)x_2\in C$.
\end{definition*}
Note the only distinction between an affine set and a convex set is that the convex set requires the $\theta$ to be $0\leqslant \theta \leqslant 1$ instead of any arbitrary real number (it requires the line segment to be in $C$ instead of the line).
\begin{theorem*}$C$ is a convex set if and only if given arbitrary $x_1, \dotsc, x_n \in C$, $\theta_1+\dots+\theta_n=1, \theta_i \geqslant 0$ for $i=1,\dotsc, n$, we have $\theta_1 x_1+\dots+\theta_n x_n \in C$.
\end{theorem*}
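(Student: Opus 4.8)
The plan is to prove the two directions separately, with the reverse implication being immediate and the forward implication proceeding by induction on the number of points $n$. For the reverse direction, suppose $C$ is closed under all convex combinations of the stated form. Specializing to $n = 2$ gives exactly the condition that $\theta_1 x_1 + \theta_2 x_2 \in C$ whenever $\theta_1 + \theta_2 = 1$ and $\theta_1, \theta_2 \geqslant 0$, which is precisely the definition of a convex set. So this direction costs nothing beyond observing that the general property contains the two-point property as a special case.

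The forward direction is where the work lies. Assuming $C$ is convex, I would induct on $n$. The base case $n = 1$ is trivial, since $\theta_1 = 1$ forces the combination to equal $x_1 \in C$ (and $n = 2$ is just the definition of convexity). For the inductive step, I would assume that every convex combination of $n-1$ points of $C$ lies in $C$, and consider a combination $\theta_1 x_1 + \dots + \theta_n x_n$ with $\sum_i \theta_i = 1$ and each $\theta_i \geqslant 0$. The key idea is to peel off the last point and rescale the rest. Provided $\theta_n \neq 1$, I can write
\[
\theta_1 x_1 + \dots + \theta_n x_n = (1-\theta_n)\!\left(\frac{\theta_1}{1-\theta_n} x_1 + \dots + \frac{\theta_{n-1}}{1-\theta_n} x_{n-1}\right) + \theta_n x_n.
\]
The rescaled coefficients $\theta_i/(1-\theta_n)$ for $i = 1, \dots, n-1$ are nonnegative and sum to $(1-\theta_n)/(1-\theta_n) = 1$, so by the inductive hypothesis the bracketed point $y$ lies in $C$. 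Then the whole expression is the two-point convex combination $(1-\theta_n)\,y + \theta_n x_n$ of the two points $y, x_n \in C$, which lies in $C$ by the definition of convexity, completing the induction.

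The main obstacle, such as it is, is the degenerate case $\theta_n = 1$: here the division by $1 - \theta_n$ is undefined, so the rescaling argument breaks down. I would dispose of this separately by noting that $\theta_n = 1$ together with $\sum_i \theta_i = 1$ and $\theta_i \geqslant 0$ forces $\theta_1 = \dots = \theta_{n-1} = 0$, whence the combination collapses to $x_n \in C$. (More generally one could reindex so that the peeled-off coefficient is one that is strictly less than $1$; such a coefficient must exist unless the combination is a single point, in which case it trivially lies in $C$.) Everything else is a routine verification that the rescaled weights form a valid convex combination, so I would not belabor it.
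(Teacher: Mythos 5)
Your proof is correct and complete. Note that the paper itself does not actually prove this theorem; it simply defers to \cite{Boyd}, pp.~24--25, so there is no in-paper argument to compare against. Your induction --- peeling off $x_n$, rescaling the remaining weights by $1/(1-\theta_n)$, and invoking the two-point definition --- is precisely the standard argument that the cited reference sketches, and you correctly dispose of the one genuine pitfall, the degenerate case $\theta_n = 1$ where the rescaling is undefined. In effect your proposal supplies the proof the paper leaves out.
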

\begin{proof}The proof of this theorem is offered in \cite{Boyd} p. 24- p.25.
\end{proof}
\subsection{Three important examples}
In this part, we look at three examples. The hyperplane and halfspace play important roles when we prove the separating and supporting hyperplanes theorems. Polyhedra, as they are convex sets, often serve as the constraint sets in convex optimization problems.
\begin{definition*} A hyperplane is a set of the form $\{x|a^T x=b\}$ where $a,x \in \mathbb R^n, a\neq 0$, and $b\in \mathbb R$.
\end{definition*}
Now, let's assume a point $x_0\in \mathbb R^n$ is on the hyperplane. Therefore we have $\{x|a^T x_0=b\}$. Then $a^T(x-x_0)=a^Tx-a^Tx_0=b-b=0$ for those $x$ on the hyperplane. We can then re-express the hyperplane in the following form:\begin{center}$\{x|a^T(x-x_0)=0\}=x_0+a^\perp$,
\end{center}where $a^\perp$ denotes the set of all vectors orthogonal to $a$: $a^\perp=\{v|a^Tv=0\}$. The equality above holds since the dot product of two vectors equals to 0 iff the two vectors are perpendicular to each other. Thus the hyperplane $\{x|a^T x=b\}$ can be interpreted as the set of points with a constant inner product to a given vector $a$, or as a hyperplane with normal vector $a$. These geometric interpretations are illustrated in Figure 1.\
\begin{figure}[!h]
\begin{center}\includegraphics[width=3.5 in]{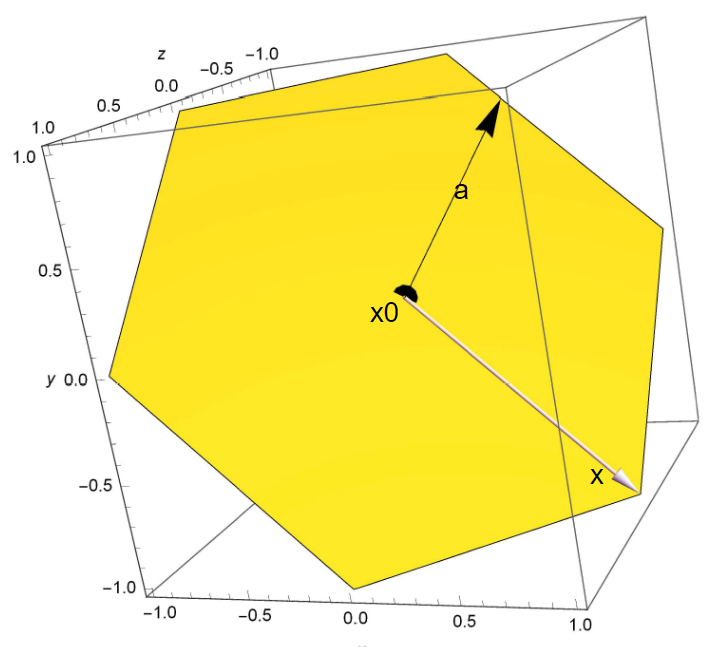}
\caption{Hyperplane $a^Tx=b$ in $\mathbb R^3$ (in this case, $a^T=[1\quad  1\quad  1]$,
$b=0$, i.e., the plane has equation $x+y+z=0$), 
with normal vector $a$ and a point $x_0$ in the hyperplane. For any point $x$ in the hyperplane, $x-x_0$ (shown as the white arrow) is orthogonal to $a$, where $x_0=0$ in this case.}
\end{center}
\end{figure}

\begin{prop*}A hyperplane is an affine set.
\end{prop*}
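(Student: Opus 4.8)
The plan is to show that a hyperplane $H = \{x \mid a^Tx = b\}$ satisfies the definition of an affine set directly. That is, I would take two arbitrary distinct points $x_1, x_2 \in H$ and an arbitrary scalar $\theta \in \mathbb{R}$, and verify that the point $y = \theta x_1 + (1-\theta)x_2$ also lies in $H$. Since membership in $H$ is characterized by the single linear equation $a^Ty = b$, the whole argument reduces to a short computation.

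First I would record what it means for $x_1$ and $x_2$ to belong to $H$: by the definition of the hyperplane, $a^Tx_1 = b$ and $a^Tx_2 = b$. Next I would compute $a^Ty$ for the candidate point by exploiting the linearity of the inner product in its second argument:
\begin{center}
$a^Ty = a^T(\theta x_1 + (1-\theta)x_2) = \theta\, a^Tx_1 + (1-\theta)\, a^Tx_2.$
\end{center}
Substituting $a^Tx_1 = b$ and $a^Tx_2 = b$ then gives $a^Ty = \theta b + (1-\theta)b = b$, so $y$ satisfies the defining equation and hence $y \in H$.

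Because $\theta$ was an arbitrary real number (not restricted to $[0,1]$) and $x_1, x_2$ were arbitrary points of $H$, this establishes that the entire line through any two points of $H$ remains in $H$, which is precisely the definition of an affine set given earlier in the excerpt. I would conclude that $H$ is affine.

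I do not expect any genuine obstacle here: the result is essentially immediate once linearity of the map $x \mapsto a^Tx$ is invoked, since an affine set is exactly a level set of an affine (here, linear) functional. The only point worth stating carefully is that the computation uses no constraint on $\theta$, which is what distinguishes this from the weaker claim that $H$ is merely convex. If anything, the mild subtlety is purely expository — making sure the reader sees that the same calculation with $0 \leqslant \theta \leqslant 1$ would only yield convexity, whereas allowing all $\theta \in \mathbb{R}$ is what upgrades the conclusion to affineness.
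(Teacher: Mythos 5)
Your proposal is correct and matches the paper's own proof essentially line for line: both take arbitrary $x_1, x_2 \in H$ and arbitrary $\theta \in \mathbb{R}$, use linearity to compute $a^T(\theta x_1 + (1-\theta)x_2) = \theta b + (1-\theta)b = b$, and conclude membership in $H$. No differences worth noting.
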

My own proof:
\begin{proof}Given a hyperplane $H,$ assume given arbitrary $x_1,x_2\in H$. By definition of hyperplane we have $a^T x_1=b$ and $a^T x_2=b$. Now for any $\theta$,
\begin{center}$a^T(\theta x_1+(1-\theta)x_2) = \theta a^T x_1+a^T x_2-\theta a^T x_2= \theta b+b-\theta b=b.$
\end{center}
Therefore $\theta x_1+(1-\theta)x_2\in H$ and by definition of affine sets, H is an affine set.
\end{proof}

\begin{definition*}A closed halfspace is a set of the form $\{x|a^T x\leqslant b\}$, where $a,x \in \mathbb R^n, a\neq 0$, and $b\in \mathbb R$.
\end{definition*}Note that a halfspace is just a solution set of one nontrivial linear inequality. Normally we define the halfspace in terms of $\leqslant.$ However, we can also define halfspace as a set of the form $\{x|a^T x\geqslant b\}$, where $a\neq 0$. This would be another halfspace as it is a solution set of linear inequality $\geqslant$. A hyperplane ($\{x|a^T x= b\}$, where $a\in \mathbb R^n, a \neq 0, b\in \mathbb R$) separates $\mathbb R^n$ into two halfspaces ($\{x|a^T x\geqslant b\}$ and $\{x|a^T x\leqslant b\}$), which is demonstrated in Figure 2 below. 
\begin{figure}[!h]
\begin{center}\includegraphics[width=3.5 in]{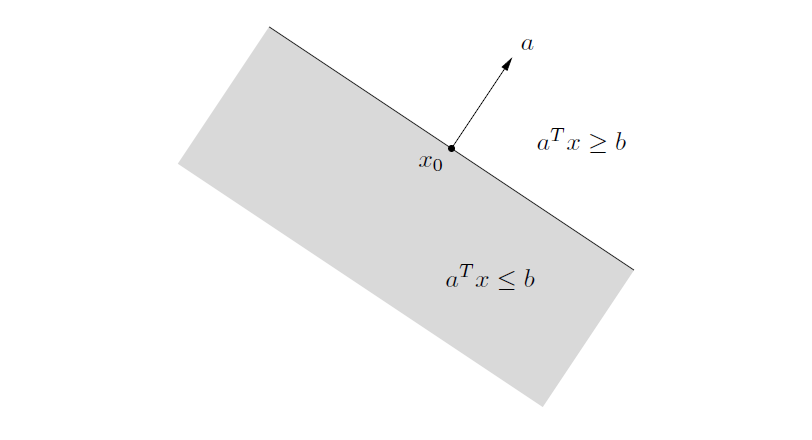}
\caption{\cite{Boyd} A hyperplane $a^Tx=b$ in $\mathbb R^2$ determines two halfspaces. The halfspace determined by $a^Tx\geqslant b$ (not shaded) is the halfspace extending in the direction $a$. The halfspace $a^Tx\leqslant b$ (shaded) extends in the direction $-a$. The vector $a$ is the outward normal of
this halfspace}
\end{center}
\end{figure}

\begin{prop*}Halfspaces are convex but not affine.
\end{prop*}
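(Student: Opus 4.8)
The plan is to handle the two claims separately, exploiting the fact that the only difference between convexity and affineness is the restriction $0 \leqslant \theta \leqslant 1$. Convexity will follow from a direct computation essentially identical to the one just used for hyperplanes, except with an inequality in place of an equality, while the failure of affineness will follow by exhibiting an explicit counterexample inside an arbitrary halfspace.

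First I would prove convexity. Given a halfspace $H = \{x \mid a^T x \leqslant b\}$ and two points $x_1, x_2 \in H$, so that $a^T x_1 \leqslant b$ and $a^T x_2 \leqslant b$, I would take any $\theta$ with $0 \leqslant \theta \leqslant 1$ and compute $a^T(\theta x_1 + (1-\theta)x_2) = \theta a^T x_1 + (1-\theta) a^T x_2$. The crucial point is that both coefficients $\theta$ and $1-\theta$ are nonnegative, so multiplying the inequalities $a^T x_1 \leqslant b$ and $a^T x_2 \leqslant b$ by them preserves their direction, yielding $\theta a^T x_1 + (1-\theta) a^T x_2 \leqslant \theta b + (1-\theta) b = b$. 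Hence $\theta x_1 + (1-\theta)x_2 \in H$, and $H$ is convex.

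Next I would show $H$ is not affine by producing two points of $H$ and a real $\theta$ for which the corresponding point on the line through them escapes $H$. Since $a \neq 0$, for any target value $c$ the point $x = \frac{c}{a^T a}\, a$ satisfies $a^T x = \frac{c}{a^T a}\, a^T a = c$; this lets me place points at prescribed inner products with $a$. I would take $x_1$ with $a^T x_1 = b$ (a point on the bounding hyperplane) and $x_2$ with $a^T x_2 = b - 1 < b$ (a point strictly inside), so that both lie in $H$. For $y = \theta x_1 + (1-\theta)x_2$ one computes $a^T y = \theta b + (1-\theta)(b-1) = b - (1-\theta)$, which exceeds $b$ precisely when $\theta > 1$. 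Choosing $\theta = 2$ then gives $a^T y = b + 1 > b$, so $y \notin H$ even though $x_1, x_2 \in H$; thus $H$ fails the defining condition for affine sets.

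I do not expect a genuine obstacle here: convexity is a one-line sign check, and the nonaffine part is just a matter of choosing the counterexample cleanly. The only thing to be careful about is ensuring the two chosen points genuinely lie in $H$ and that the selected $\theta$ outside $[0,1]$ really drives the point across the bounding hyperplane. This is exactly where the argument contrasts with the convex case: the coefficient $1-\theta$ becomes negative, flipping the inequality that nonnegativity had preserved.
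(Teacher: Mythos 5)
Your proposal is correct and takes essentially the same approach as the paper: a direct nonnegativity-of-coefficients computation for convexity, followed by an explicit choice of two points and a $\theta$ outside $[0,1]$ to defeat affineness. If anything, your counterexample is slightly more complete than the paper's, which fixes the specific halfspace $b=2$ and assumes points with $a^T x_1 = 1$ and $a^T x_2 = -1$ exist without constructing them, whereas you produce the required points explicitly via $x = \frac{c}{a^T a}\,a$ (using $a \neq 0$) so that the argument covers an arbitrary halfspace.
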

My own proof:
\begin{proof}Given a halfspace H, assume given arbitrary $x_1,x_2\in H$. By definition of halfspace we have $a^T x_1\leqslant b$ and $a^T x_2\leqslant b$. Now $\theta$ with $0 \leqslant \theta \leqslant 1$, \begin{center}$a^T(\theta x_1+(1-\theta)x_2)= \theta a^T x_1+(1-\theta)a^T x_2$. \end{center} Since $0 \leqslant \theta \leqslant 1$, we know $0\leqslant 1-\theta \leqslant 1$. Thus we have \begin{center}$\theta a^T x_1\leqslant \theta b$\end{center} \begin{center}$(1-\theta) a^T x_2\leqslant (1-\theta) b$\end{center} and thus  \begin{center}$\theta a^T x_1+(1-\theta)a^T x_2\leqslant \theta b+(1-\theta)b=b$. \end{center}
Note, however, if we don't limit $\theta$, then $a^T(\theta x_1+(1-\theta)x_2)$ could explode and won't be bounded by $b$. For a concrete counterexample, if we assume $b=2$ and  for $x_1,x_2\in H$, we have $a^T x_1=1 \leqslant b$ and $a^T x_2 = -1 \leqslant b$. Now if we take $\theta = 10$, we then would have  $\theta a^T x_1+(1-\theta)a^T x_2=10*1+(1-10)*(-1)=19>2$, which means  $\theta a^T x_1+(1-\theta)a^T x_2 \notin H$.
Thus halfspaces are not affine.\end{proof}
Similar to hyperplanes, a halfspace can also be expressed as $\{x|a^T(x-x_0)\leqslant 0\}$, where $a^T x_0=b$. Thus geometrically the hyperplane consists of an offset $x_0$ plus all vectors that make an obtuse or right angle with the vector $a$.
\begin{definition*}A polyhedron (Figure 3) is defined as the solution set of a finite number of linear equalities and inequalities: \begin{center}$P=\{x|a_i^T x\leqslant b_i, i=1,\dotsc, m, c_j^T x=d_j,j=1,\dotsc,p\}$, where $a_i, x, c_j \in \mathbb R^n, a_i, c_j\neq 0$, and $b_i, d_j\in \mathbb R$.\end{center}
\end{definition*}
A polyhedron is thus the intersection of a finite number of halfspaces and hyperplanes. Polyhedra are convex as will be proved in the following section.
\begin{figure}[!h]
\begin{center}\includegraphics[width=3.5 in]{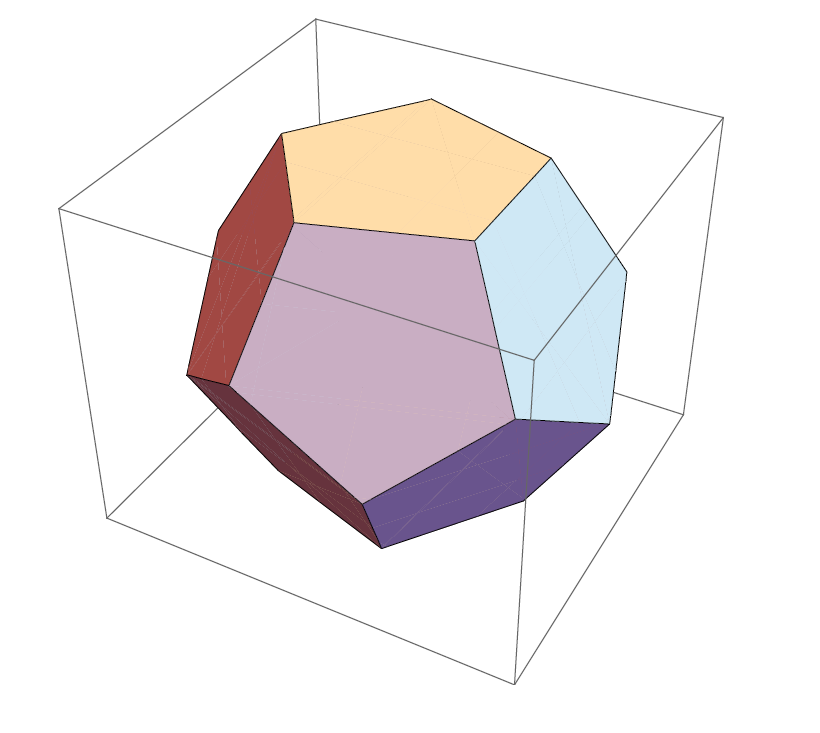}
\caption{Here is an example of polyhedron (Dodecahedron) that consists of 12 closed subspaces}
\end{center}
\end{figure}
\subsection{Operations that preserve convexity}
\begin{theorem}Convexity is preserved under intersection: let $S_a$, $a\in \Gamma$ be a collection of convex sets. Then $S=\cap_a S_a$, the intersection of these sets, is convex.
\label{thm:convexintersect}
\end{theorem}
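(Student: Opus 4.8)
The plan is to argue directly from the definition of a convex set given above, pushing the convexity condition through the intersection one index at a time. First I would fix two arbitrary points $x_1, x_2 \in S$ together with a scalar $\theta$ satisfying $0 \leqslant \theta \leqslant 1$; the goal is then to show that the combination $\theta x_1 + (1-\theta) x_2$ again lies in $S$.

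The key observation is the definition of intersection: a point belongs to $S = \cap_a S_a$ precisely when it belongs to every $S_a$. So from $x_1, x_2 \in S$ I would deduce that $x_1 \in S_a$ and $x_2 \in S_a$ for each index $a \in \Gamma$. Now I would invoke the hypothesis that each individual $S_a$ is convex: since $x_1, x_2 \in S_a$ and $0 \leqslant \theta \leqslant 1$, convexity of $S_a$ yields $\theta x_1 + (1-\theta) x_2 \in S_a$. The crucial point is that this membership holds simultaneously for every $a$, with the \emph{same} point $\theta x_1 + (1-\theta) x_2$.

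Having shown $\theta x_1 + (1-\theta) x_2 \in S_a$ for all $a \in \Gamma$, I would apply the definition of intersection in the reverse direction to conclude $\theta x_1 + (1-\theta) x_2 \in \cap_a S_a = S$. Since $x_1, x_2$ and $\theta$ were arbitrary, this establishes that the line segment between any two points of $S$ lies in $S$, which is exactly the definition of $S$ being convex.

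There is no substantive obstacle here: the argument is a direct unwinding of the definitions, and the same scheme works verbatim for an arbitrary (even uncountable) index set $\Gamma$, since no finiteness of the collection is ever used. The only degenerate case worth a brief remark is when the intersection is empty, in which case the convexity condition is vacuously satisfied and $S$ is still (trivially) convex.
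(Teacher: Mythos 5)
Your proof is correct and follows essentially the same argument as the paper: take two points of the intersection, note they lie in every $S_a$, apply convexity of each $S_a$ to the same convex combination, and conclude membership in the intersection. The added remarks on the empty intersection and on arbitrary index sets are fine but do not change the argument.
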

My own proof:
\begin{proof}For any $x,y\in S, \theta \in [0,1]$, we have $x,y\in S_a$ for all $a\in \Gamma$. By convexity of sets, we know $\theta x+(1-\theta)y\in S_a$, for each $a\in \Gamma,$ where $\theta \in [0,1]$. Thus $\theta x+(1-\theta)y\in \bigcap_{a} S_a=S$. Therefore $S$ is convex.
\end{proof}
\begin{prop*}A polyhedron is convex.
\end{prop*}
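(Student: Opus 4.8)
The plan is to realize a polyhedron as an intersection of halfspaces and hyperplanes, and then invoke the convexity-preservation-under-intersection result (Theorem~\ref{thm:convexintersect}) together with the two propositions already proved, namely that halfspaces are convex and that hyperplanes are affine.

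First I would recall the defining form of a polyhedron, $P=\{x\mid a_i^T x\leqslant b_i,\ i=1,\dotsc,m,\ c_j^T x=d_j,\ j=1,\dotsc,p\}$, and observe that each inequality constraint $a_i^T x\leqslant b_i$ carves out exactly a closed halfspace $H_i=\{x\mid a_i^T x\leqslant b_i\}$, while each equality constraint $c_j^T x=d_j$ carves out exactly a hyperplane $G_j=\{x\mid c_j^T x=d_j\}$. A point lies in $P$ precisely when it satisfies all $m+p$ constraints simultaneously, so by definition $P=\bigl(\bigcap_{i=1}^{m}H_i\bigr)\cap\bigl(\bigcap_{j=1}^{p}G_j\bigr)$. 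This is the structural observation that does all the work.

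Next I would supply the convexity of each piece. The halfspaces $H_i$ are convex by the proposition proved above. For the hyperplanes $G_j$, the earlier proposition shows each is affine; I would then note the (routine) fact that every affine set is convex, since the convexity condition merely restricts the scalar $\theta$ in the affine condition to the subinterval $[0,1]$. Hence all $m+p$ sets in the intersection are convex. Applying Theorem~\ref{thm:convexintersect} to this finite collection then yields that $P$, being their intersection, is convex.

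I do not anticipate a serious obstacle here, as the argument is essentially bookkeeping once the intersection decomposition is written down. The only point requiring a word of care is the hyperplane case, where the earlier proposition gives affineness rather than convexity directly; I would want to state explicitly that affine $\Rightarrow$ convex so that Theorem~\ref{thm:convexintersect} applies uniformly to every factor. One could alternatively sidestep this by writing each hyperplane $G_j$ as the intersection of the two halfspaces $\{x\mid c_j^T x\leqslant d_j\}$ and $\{x\mid c_j^T x\geqslant d_j\}$, thereby expressing $P$ purely as an intersection of halfspaces and invoking only the halfspace proposition; either route is clean, and I would pick the affine-implies-convex remark as the more economical of the two.
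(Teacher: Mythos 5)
Your proposal is correct and follows essentially the same route as the paper: write $P$ as the intersection of the halfspaces and hyperplanes defined by its constraints, then apply Theorem~\ref{thm:convexintersect} together with the earlier convexity propositions. Your explicit remark that affine sets are convex (or the alternative of splitting each hyperplane into two halfspaces) is a small but welcome refinement, since the paper's one-line proof asserts the hyperplanes are convex without noting that only their affineness was proved earlier.
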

\begin{proof}
Since a polyhedron is the intersection of a finite number of halfspaces and hyperplanes, which are both convex, thus polyhedra are also convex.\end{proof}
\begin{definition*}A function $f: \mathbb R^n \rightarrow \mathbb R^m$ is affine if it has the form $f(x)=Ax+b$, where $A\in \mathbb R^{m\times n}$ and $b\in\mathbb R^m$, i.e., if it is a sum of a linear function and a constant.
\end{definition*}
\begin{theorem}Suppose $S\subseteq \mathbb R^n$ is convex and $f:\mathbb R^n\rightarrow \mathbb R^m$ is an affine function. Then the image of $S$ under $f$, \begin{center}$f(S)=\{f(x)|x\in S\}$\end{center} is convex.
\end{theorem}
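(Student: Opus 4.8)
The plan is to verify convexity of $f(S)$ directly from the definition: I would take two arbitrary points of the image together with a parameter $\theta \in [0,1]$ and show that their convex combination again lies in $f(S)$. The entire argument rests on one observation, namely that an affine map, although not linear, nevertheless respects convex combinations, because the combination coefficients sum to $1$ and so the constant term of the map is reproduced exactly.

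First I would fix arbitrary $y_1, y_2 \in f(S)$ and $\theta$ with $0 \leqslant \theta \leqslant 1$. By the definition of the image, there must exist preimages $x_1, x_2 \in S$ with $f(x_1) = y_1$ and $f(x_2) = y_2$. My goal then becomes to produce some single point of $S$ whose image under $f$ equals $\theta y_1 + (1-\theta) y_2$. Writing $f(x) = Ax + b$ as in the definition of an affine function, I would compute $\theta y_1 + (1-\theta) y_2 = \theta(Ax_1 + b) + (1-\theta)(Ax_2 + b)$, distribute $A$ through the linear parts, and collect the two copies of $b$. Here the key cancellation occurs: since $\theta + (1-\theta) = 1$, the constant terms combine back into a single $b$, so the expression simplifies to $A(\theta x_1 + (1-\theta) x_2) + b = f(\theta x_1 + (1-\theta) x_2)$. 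In other words, $f$ sends the convex combination of the preimages to the convex combination of the images.

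Finally I would invoke convexity of $S$ from the hypothesis: because $x_1, x_2 \in S$ and $\theta \in [0,1]$, the point $\theta x_1 + (1-\theta) x_2$ lies in $S$, and hence its image under $f$ lies in $f(S)$. This shows $\theta y_1 + (1-\theta) y_2 \in f(S)$, which is exactly the convexity condition for $f(S)$. I do not expect any genuine obstacle here; the one point worth flagging is that the constant vector $b$ does not interfere with the argument \emph{precisely} because the coefficients of a convex combination sum to $1$ (this is the same feature that made halfspaces and hyperplanes behave well earlier). The computation itself is entirely routine.
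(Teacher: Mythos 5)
Your proposal is correct and follows essentially the same route as the paper's proof: both pull back to preimages $x_1, x_2 \in S$, use the identity $\theta(Ax_1+b)+(1-\theta)(Ax_2+b)=A(\theta x_1+(1-\theta)x_2)+b$, and then invoke convexity of $S$ to conclude the combination lies in $f(S)$. No gaps; your explicit remark that the constant term $b$ survives because the coefficients sum to $1$ is the same cancellation the paper performs in its displayed computation.
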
My own proof:
\begin{proof}Given arbitrary $s_1,s_2\in f(S)$, by definition of $f(S)$, we know that there must exist $x_1,x_2\in S$ s.t. $f(x_1)=s_1, f(x_2)=s_2$. Therefore \begin{center}$Ax_1+b=s_1, Ax_2+b=s_2$. \end{center} 
Since $x_1,x_2\in S$ where $S$ is convex, by definition of convexity, we have for any $\theta \in [0,1]$, \begin{center}$\theta x_1+(1-\theta)x_2\in S$.\end{center}
Thus \begin{align*}\theta s_1+(1-\theta)s_2 &= \theta(Ax_1+b)+(1-\theta)(Ax_2+b)\\
&=\theta Ax_1+b\theta+Ax_2+b-\theta Ax_2-b\theta \\
&=A(\theta x_1+(1-\theta)x_2)+b
\end{align*}
Since we know $\theta x_1+(1-\theta)x_2\in S$, then \begin{center}$\theta s_1+(1-\theta)s_2\in f(S)$.\end{center} Therefore $f(S)$ is convex.
\end{proof}
\begin{theorem}If $S_1$ and $S_2$ are convex, then $S_1+S_2$ is convex, where $S_1+S_2=\{x+y|x\in S_1, y \in S_2\}$.
\label{thm:convexadd}
\end{theorem}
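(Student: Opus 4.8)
The plan is to argue directly from the definition of convexity, exactly as in the proofs of Theorems~\ref{thm:convexintersect} and~2. I would take two arbitrary elements $z_1, z_2 \in S_1 + S_2$ together with a scalar $\theta \in [0,1]$, and show that the convex combination $\theta z_1 + (1-\theta) z_2$ again lies in $S_1 + S_2$, which is precisely the convexity condition.

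First I would unpack the definition of the sumset. Since $z_1 \in S_1 + S_2$, there exist $x_1 \in S_1$ and $y_1 \in S_2$ with $z_1 = x_1 + y_1$, and likewise $z_2 = x_2 + y_2$ with $x_2 \in S_1$ and $y_2 \in S_2$. The key algebraic step is then to regroup the combination so that the $S_1$-parts and the $S_2$-parts are separated:
\begin{align*}
\theta z_1 + (1-\theta) z_2 &= \theta(x_1 + y_1) + (1-\theta)(x_2 + y_2) \\
&= \bigl[\theta x_1 + (1-\theta) x_2\bigr] + \bigl[\theta y_1 + (1-\theta) y_2\bigr].
\end{align*}
Now convexity of $S_1$ gives $\theta x_1 + (1-\theta) x_2 \in S_1$, and convexity of $S_2$ gives $\theta y_1 + (1-\theta) y_2 \in S_2$. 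Hence the right-hand side is a sum of an element of $S_1$ and an element of $S_2$, so it lies in $S_1 + S_2$, completing the argument.

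There is no serious obstacle here; the only point demanding a little care is the regrouping, where one must check that the \emph{same} $\theta$ multiplies the $S_1$- and $S_2$-components, so that both bracketed expressions are genuine convex combinations with coefficients in $[0,1]$ and the two invocations of convexity are legitimate. As an alternative route, one could view $S_1 + S_2$ as the image of the product set $S_1 \times S_2 \subseteq \mathbb{R}^{2n}$ under the linear (hence affine) map $(x,y) \mapsto x + y$ and then appeal to Theorem~2; however, that approach would first require establishing that a product of convex sets is convex, which the excerpt has not done, so the direct computation above is the more economical choice.
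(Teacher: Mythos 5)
Your proof is correct: the decomposition of $z_1$ and $z_2$, the regrouping so that the same $\theta$ multiplies the $S_1$-parts and the $S_2$-parts, and the two separate invocations of convexity are exactly what is needed, and you handle the coefficient bookkeeping carefully. Note, however, that the paper does not actually write out a proof of this theorem --- it defers to \cite{Boyd}, p.~38, and Boyd's argument there is precisely the alternative you sketch and then set aside: realize $S_1+S_2$ as the image of the Cartesian product $S_1\times S_2$ under the linear map $(x,y)\mapsto x+y$, note that products of convex sets are convex, and apply preservation of convexity under affine maps (Theorem 2 of this paper). So your direct computation is a genuinely different route, and in the context of this paper arguably the better one: it is completely self-contained, uses nothing beyond the definition of convexity, and avoids the product-convexity lemma that the paper never establishes. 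What the cited approach buys in exchange is economy once the affine-image machinery is in place --- the result becomes a one-line corollary, and the same argument immediately handles sums of finitely many sets, differences such as $S_1-S_2$, and more general affine images, which is why Boyd organizes it that way.
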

Proof is given in \cite{Boyd} on page 38.\\

We now look at couple of examples that are convex sets and will be used later in the convex optimization problem:
\begin{example} The set of points that are closer, in Euclidean norm, to (2,3,4) than the set $\{x,y,z\in \mathbb R|z\leqslant 0\}$ is a convex set. \label{example:1}
\end{example}
\begin{proof}Note in fact that the closest point to $(2,3,4)$ in the set $\{x,y,z\in \mathbb R|z\leqslant 0\}$ is $(2,3,0)$. It has Euclidean distance 4 to the point $(2,3,4)$ and thus this constraint, through geometric interpretation, would be equivalent to the set $S=B((2,3,4),4)=\{x,y,z\in \mathbb R^3| \|(x,y,z)-(2,3,4)\|_2\leqslant 2\}$. Since Euclidean balls are convex by \cite{Boyd} p.30, this set is automatically convex.\end{proof} However, we put the set in this form because it's known as a Voronoi set. We can slightly adjust this set by changing the set $\{x,y,z\in \mathbb R|z\leqslant 0\}$ into a set of given points. For example, another Voronoi set could be the set of points that are closer to (2,3,4) than the set $\{(3,4,5), (4,2,8), \cdots\}$. Thus it would be interesting to prove the Voronoi region itself being convex.\\

Here is one way to prove the Voronoi region being convex without using any geometric intuitions.
\begin{proof}
We begin by proving a proposition, which is left as the exercise 2.9 in \cite{Boyd}: \begin{prop*}Let $x_0,\cdots, x_K\in \mathbb R^n$. Consider the set of points that are at least as close as to $x_0$ as to all the $x_i$, i.e.,\begin{center}$V=\{x\in \mathbb R^n| \|x-x_0\|_2\leqslant ||x-x_i||_2, i=1, \cdots, K\}$.
\end{center}Then $V$ is convex.
\end{prop*}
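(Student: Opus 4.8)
The plan is to show that each of the $K$ inequality constraints defining $V$ describes a halfspace, so that $V$ is a finite intersection of halfspaces and is therefore convex by Theorem~\ref{thm:convexintersect}. The essential observation is that although the constraint $\|x-x_0\|_2 \leqslant \|x-x_i\|_2$ looks nonlinear, it is in fact equivalent to a single linear inequality in $x$.

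First I would fix an index $i \in \{1,\dots,K\}$ and rewrite the $i$-th constraint. Since both sides are nonnegative, the inequality $\|x-x_0\|_2 \leqslant \|x-x_i\|_2$ is equivalent to the squared inequality $\|x-x_0\|_2^2 \leqslant \|x-x_i\|_2^2$. Expanding each squared Euclidean norm using $\|v\|_2^2 = v^T v$ gives $x^T x - 2x_0^T x + x_0^T x_0 \leqslant x^T x - 2x_i^T x + x_i^T x_i$ on the two sides. The crucial step is that the quadratic term $x^T x$ is common to both sides and cancels, leaving a genuinely linear inequality. Collecting terms, I would bring it to the form $a_i^T x \leqslant b_i$ with $a_i = 2(x_i - x_0)$ and $b_i = \|x_i\|_2^2 - \|x_0\|_2^2$. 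By the definition of a halfspace, the set $\{x \mid a_i^T x \leqslant b_i\}$ is a halfspace (hence convex), so the $i$-th constraint set is convex.

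Having done this for each $i$, I would observe that $V = \bigcap_{i=1}^{K}\{x \mid a_i^T x \leqslant b_i\}$, a finite intersection of halfspaces; indeed this exhibits $V$ as a polyhedron. Convexity then follows immediately from Theorem~\ref{thm:convexintersect}, since each factor in the intersection is convex.

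The cancellation of the quadratic term is the heart of the argument and the step I expect to matter most, but the one genuine subtlety is the nondegeneracy requirement $a_i \neq 0$ built into the definition of a halfspace. If $x_i = x_0$ for some $i$, then $a_i = 0$ and the constraint collapses to $0 \leqslant 0$, which is satisfied by every point of $\mathbb{R}^n$; such a constraint contributes all of $\mathbb{R}^n$ to the intersection and may simply be dropped. Assuming, as is natural for a Voronoi region, that the points $x_1,\dots,x_K$ are all distinct from $x_0$, every $a_i$ is nonzero and each constraint is an honest halfspace, so no special-casing is needed.
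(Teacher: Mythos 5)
Your proposal is correct and follows essentially the same route as the paper: square the inequality, cancel the common $x^Tx$ term to obtain the linear inequality $2(x_i-x_0)^Tx \leqslant x_i^Tx_i - x_0^Tx_0$, and recognize $V$ as a polyhedron (an intersection of halfspaces), hence convex. Your explicit treatment of the degenerate case $x_i = x_0$ is a small point of extra care that the paper's proof passes over silently, but it does not change the substance of the argument.
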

\begin{proof}we have  $||x-x_0||_2\leqslant ||x-x_i||_2$ iff \begin{center}$(x-x_0)^T(x-x_0)\leqslant (x-x_i)^T(x-x_i)$\\
$x^Tx-2x_0^Tx+x_0^Tx_0\leqslant x^Tx-2x_i^Tx+x_i^Tx_i$\\
$2(x_i-x_0)^Tx\leqslant x_i^Tx_i-x_0^Tx_0$.
\end{center}
Thus we can re-express $V$ as $V=\{x|A^Tx\leqslant b\}$ where \begin{center}\[A^T=2\begin{bmatrix}
x_1-x_0\\
x_2-x_0\\
\vdots\\
x_K-x_0
\end{bmatrix}\], \[b=\begin{bmatrix}x_1^Tx_1-x_0^Tx_0\\
\vdots\\
x_K^Tx_K-x_0^Tx_0
\end{bmatrix}.\]\end{center}
In this way, we expressed $V$ in a polyhedron form and according to what we have proven before, $V$ then must be convex.
\end{proof}
The set of points closer to a given point than to a given set, i.e., $\{x |\|x-x_0\|_2\leqslant \|x-y\|_2 \text{for all } y \in S\}$ where $S\subseteq \mathbb R^n$, can be expressed as \begin{center}$\bigcap_{y\in S} \{x|\|x-x_0\|_2\leqslant \|x-y\|_2\}$.
\end{center}We just proved that for each particular $y$, the set is convex. Since the intersection of convex sets is still convex, we then know that the set described above must also be convex.\\
\end{proof}

\subsection{Separating and supporting hyperplanes}
In this section we describe the following idea: the use of hyperplanes or affine functions to separate convex sets that do not intersect. The result is the separating hyperplane theorem.
\begin{theorem}The separating hyperplane theorem: Suppose $C$ and $D$ are nonempty disjoint convex sets, i.e., $C\cap D= \emptyset$. Then there exist $a\neq 0$ and $b$ such that $a^T x-b$ is nonpositive on $C$ and nonnegative on $D$. The hyperplane $\{x|a^Tx=b\}$ is called a separating hyperplane for the sets $C$ and $D$.
\end{theorem}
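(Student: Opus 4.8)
The plan is to reduce the two-set problem to separating a single convex set from the origin, and then to construct the hyperplane from a pair of nearest points. First I would form the difference set $E = D - C = \{d - c \mid c \in C,\ d \in D\}$. Since the map $x \mapsto -x$ is affine, $-C$ is convex by the affine-image theorem above, and then $E = D + (-C)$ is convex by Theorem~\ref{thm:convexadd}; moreover $C \cap D = \emptyset$ forces $0 \notin E$. The assertion that $a^T x - b$ is nonpositive on $C$ and nonnegative on $D$ is equivalent, after absorbing the offset, to producing a single direction $a \neq 0$ with $a^T z \geq 0$ for every $z \in E$. So it suffices to separate the convex set $E$ from the origin, and the sign convention $a^T c \leq b \leq a^T d$ comes out automatically from $a^T(d-c) \geq 0$.

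For the main construction I would first treat the favorable case in which the Euclidean distance from the origin to $E$ is positive and attained, i.e. there is a minimal-norm point $e^{*} \in E$ with $\|e^{*}\|_2 > 0$; equivalently there are closest points $c^{*} \in C$ and $d^{*} \in D$ realizing $\mathrm{dist}(C,D)$, with $e^{*} = d^{*} - c^{*}$. I would then set $a = e^{*}$ and verify separation by a first-order argument: for any $z \in E$, convexity gives $e^{*} + t(z - e^{*}) \in E$ for $t \in [0,1]$, so $h(t) = \|e^{*} + t(z - e^{*})\|_2^2$ is minimized at $t = 0$, whence $h'(0) = 2\,(e^{*})^T(z - e^{*}) \geq 0$. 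This yields $a^T z \geq \|a\|_2^2 > 0$ for all $z \in E$, which translates back to $a^T c < a^T d$ uniformly over $C$ and $D$; taking $b = a^T(c^{*} + d^{*})/2$ then gives the desired separating hyperplane.

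The hard part will be the general case, where the distance between $C$ and $D$ may be zero or the infimum may fail to be attained (for instance when both sets are unbounded and approach one another asymptotically), so that the nearest-point construction degenerates. I would pass to the closure $\bar E$, which is again convex. If $0 \notin \bar E$, I can project the origin onto the closed convex set $\bar E$ and run the argument above verbatim. The genuinely delicate situation is $0 \in \bar E$, where $0$ lies on the boundary of $\bar E$ (the lower-dimensional case, with $E$ contained in a proper affine subspace, being handled separately and more easily). There I would seek a \emph{supporting} hyperplane at the origin by choosing exterior points $w_k \to 0$ with $w_k \notin \bar E$, projecting each onto $\bar E$, and recording the unit normal directions $a_k$. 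Since the $a_k$ lie on the compact unit sphere, I would extract a convergent subsequence $a_{k} \to a$ with $\|a\|_2 = 1 \neq 0$, and pass to the limit in the separation inequalities to obtain $a^T z \geq 0$ for all $z \in \bar E \supseteq E$. Guaranteeing that such exterior points exist and that the inequality survives the limit is the crux; this compactness argument on the sphere is exactly what upgrades the clean attained-distance case to the full theorem.
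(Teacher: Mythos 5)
Your proposal is correct, and its skeleton matches the paper's: both reduce the theorem to separating the difference set from the origin (you use $E = D-C$, the paper uses $S = C-D$; only the sign convention differs), and in the attained-distance case both take the minimal-norm point as the normal and verify separation by the same first-order argument --- the paper's contradiction via $\frac{d}{dt}\|s+t(u-s)\|_2^2\big|_{t=0} < 0$ is exactly your variational inequality $h'(0)\geqslant 0$, stated contrapositively. The genuine divergence is in the degenerate case $0 \in \textbf{cl}\,E$. The paper approximates the set from the \emph{inside}: it forms $S_\epsilon = \{s \mid B(s,\epsilon)\subseteq S\}$, separates $\textbf{cl}\,S_\epsilon$ from the origin for each $\epsilon$, lets $\epsilon_k \to 0$, and extracts a convergent subsequence of the resulting normals. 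You approximate the origin from the \emph{outside}: exterior points $w_k \to 0$ with $w_k \notin \bar E$, projections onto the closed convex set $\bar E$, and unit normals $a_k$ on the compact sphere. Your route is the more standard one and is more robust at precisely the crux you flag: because your $a_k$ are normalized, Bolzano--Weierstrass genuinely applies, and the inequalities $a_k^T z \geqslant a_k^T w_k \geqslant -\|w_k\|_2$ pass to the limit $a^T z \geqslant 0$ cleanly; the paper instead asserts that the unnormalized sequence $a(\epsilon_k)$ has a convergent subsequence (which needs exactly the normalization you make explicit) and leaves the claim $S_{\epsilon_k} \to \textbf{int}\,S$ informal. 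Two debts remain in your plan, both of which the paper also incurs in analogous form: you must show exterior points exist, i.e. $0 \notin \textbf{int}(\bar E)$, which for full-dimensional $E$ rests on the convex-set fact $\textbf{int}(\textbf{cl}\,E) = \textbf{int}\,E$; and the empty-interior case must actually be closed out by the affine-hull observation you defer (since $0 \in \textbf{cl}\,E$, the affine hull of $E$ is a proper linear subspace, hence contained in a hyperplane through the origin whose normal separates with equality).
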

The first half of the proof below is based on the ideas mentioned in \cite{Boyd} but then left as an exercise, with things reorganized and more explicit and filling gaps of the idea (the two propositions). The second half is my own proof.
\begin{proof}We first construct \begin{center}$S=\{x-y|x\in C, y\in D\}$.\end{center} We begin by claiming a simple proposition.
\begin{prop*}Scaling of a convex set is still convex: if $S\in \mathbb R^n$ is convex, $a\in \mathbb R$, then $aS=\{ax|x\in S\}$ is convex.
\end{prop*}
\begin{proof}Follows from Theorem 2.
\end{proof}
$S$ is convex by theorem \ref{thm:convexadd} since it is the sum of two convex sets, one is C, the other is -D by the above proposition. Since $C$ and $D$ are disjoint, $0 \notin S$.\\
$\textbf{Case 1}$: suppose $0$ is not in the closure of $S$, i.e, $0 \notin \textbf{cl} S$. Thus the Euclidean distance between $\textbf{cl} S$ and $\{0\}$, defined as \begin{center}$\text{inf}\{\|u-0\|_2=\|u\|_2 | u\in \textbf{cl} S, 0\in \{0\}\}$,\end{center} is positive by \cite{Analysis}, and there exists a point $s\in \textbf{cl} S$ that achieves the minimum distance. Note $s\neq 0$ since otherwise $0 \in \textbf{cl} S$. \\
Define \begin{center}$b=\frac{\|s\|_2^2}{2}$.\end{center}
We will show that the affine function \begin{center}$f(x)=s^Tx-b=s^T(x- \frac{s}{2})$\end{center} is nonnegative on $\textbf{cl} S$ and nonpositive on $\{0\}$. We prove by contradiction.\\
First suppose there exists $u\in \textbf{cl} S$, s.t. $f(u)=s^T(u-\frac{s}{2}) < 0$. We then have \begin{align*}f(u)&=s^T(u-\frac{s}{2})\\
&=s^T(u-s+\frac{s}{2})\\
&=s^T(u-s)+\frac{1}{2}\|s\|^2_2 <0 .
\end{align*}
Since $\|s\|^2_2\geqslant 0$, we must have $s^T(u-s)<0$. Now since \begin{center}$\frac{d}{dt} \|s+t(u-s)\|^2_2|_{t=0}=\frac{d}{dt} (s^T s+t^2(u-s)^T(u-s)+2ts^T(u-s))|_{t=0}=2t(u-s)^T(u-s)+2s^T(u-s)|_{t=0}=2s^T(u-s)<0$,\end{center}meaning the function $\|s+t(u-s)\|_2$ is monotone decreasing. Thus for small $t\in [0,1]$, we have $\|s+t(u-s)\|_2<\|s\|_2$,i.e., the point $s+t(u-s)$ is closer to $0$ than $s$ is. We now prove another lemma:\begin{prop*}The closure of a convex set $S$ is convex.
\end{prop*}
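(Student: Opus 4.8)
The plan is to prove convexity directly from the definition, approximating points of the closure by points of $S$ and passing to a limit. First I would fix arbitrary $x, y \in \textbf{cl}\, S$ and a parameter $\theta$ with $0 \leqslant \theta \leqslant 1$; the goal is to show $\theta x + (1-\theta)y \in \textbf{cl}\, S$. Since $x$ and $y$ lie in the closure of $S$, there exist sequences $(x_k)$ and $(y_k)$ of points in $S$ with $x_k \to x$ and $y_k \to y$.

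Next I would use the convexity of $S$ itself: for each index $k$, the point $z_k = \theta x_k + (1-\theta) y_k$ belongs to $S$, since $x_k, y_k \in S$ and $0 \leqslant \theta \leqslant 1$. Then I would observe that the map $(u,v) \mapsto \theta u + (1-\theta) v$ is continuous, so that $z_k \to \theta x + (1-\theta) y$ as $k \to \infty$. Hence the point $\theta x + (1-\theta) y$ is a limit of a sequence of points of $S$, and by the sequential characterization of the closure this forces $\theta x + (1-\theta)y \in \textbf{cl}\, S$. Because $x$, $y$, and $\theta$ were arbitrary, $\textbf{cl}\, S$ satisfies the definition of a convex set.

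The argument is short, and I expect the only step needing any care to be the limit passage: confirming both that $\theta x_k + (1-\theta) y_k \to \theta x + (1-\theta) y$ and that a limit of points in $S$ lands in $\textbf{cl}\, S$. The first is just continuity of vector addition and scalar multiplication in $\mathbb{R}^n$, and the second is the definition of the closure in terms of limits of sequences; I would cite these standard facts (as the paper does elsewhere via \cite{Analysis}) rather than expand them into $\epsilon$-estimates. This is the lemma invoked above to justify that the minimizing point $s$ lies in a convex set on which the subsequent monotonicity argument can be run, so keeping the proof self-contained but brief fits the surrounding development.
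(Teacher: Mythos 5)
Your proof is correct and follows essentially the same route as the paper's: approximate $x, y \in \textbf{cl}\, S$ by sequences in $S$, apply convexity of $S$ termwise, and pass to the limit using continuity of the convex combination and the sequential characterization of the closure. No gaps; the paper's version is just a more compressed statement of exactly this argument.
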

\begin{proof}Given arbitrary $x,y \in \textbf{cl} S$, and $\theta \in [0,1]$. By sequence interpretation of the definition of closure, there exist $x_k$, $y_k \in S$ such that $x_k \rightarrow x$ and $y_k \rightarrow y$. Since $S$ is convex, then for any $k$, we have $\theta x_k+(1-\theta)y_k\in S$. Then if we take the limit, we have $\theta x+(1-\theta)y=\lim_{k\rightarrow \infty}(\theta x_k+(1-\theta)y_k) \in \textbf{cl} S$.
\end{proof}Since $\textbf{cl} S$ is convex and contains $s$ and $u$, by convexity, \begin{center}$s+t(u-s)\in \textbf{cl} S$.\end{center}
This derives contradiction since $s$ is the point in $\textbf{cl} S$ closest to $\{0\}$. Therefore$s^Tx-b$ is nonnegative on $\textbf{cl} S$.\\
To prove $s^Tx-b$ is nonpositive on $\{0\}$ is straightfoward. For $f(x)=s^T x-\frac{\|s\|_2^2}{2}$, since the only element in the set $\{0\}$ is $0$, thus $f(x)=-\frac{\|s\|_2^2}{2}$ and consequently always nonpositive (actually always negative since $s\neq 0$). We therefore have shown $f(x)=s^Tx-b$ is nonpositive on {0} and nonnegative on $\textbf{cl}S$ since $f(x)=s^Tx-\frac{\|s\|_2^2}{2}\geqslant 0$ for all $x\in S$.\\
Now we know that for any $x\in S$, we have $f(x)=s^T(x- \frac{s}{2})\geqslant 0$. Therefore by picking $x=a-b$ where $a\in C$ and $b\in D$,\begin{align*}f(a-b)=s^T(a-b-\frac{s}{2})&\geqslant 0\\
s^Ta-s^Tb &\geqslant \frac{\|s\|^2_2}{2}
\end{align*}Since $\frac{\|s\|^2_2}{2}> 0$, we always have $s^Ta>  s^Tb$ for all $a\in C$ and $b\in D$. Therefore the set $\{s^Ta|a\in C\}$ is bounded below, which means it must have an infinimum, and this infinimum is an upper bound of the set $\{s^Tb|b\in D\}$. Let's call it $k$.  Then, define $g(x)=k-s^Tx$. This $g$ satisfies that $g$ is nonpositive on $C$ and nonnegative on $D$.
A separating hyperplane exists for two disjoint sets C and D in this case.\\
$\textbf{Case 2}$:Now assume $0\in \textbf{cl}S$. Geometrically this might happen when, for example, $C$ and $D$ are both open and their boundaries intersect. Since $0\notin S$, $0$ must lie on the boundary of S. If the interior of $S$ is the empty set, it must be contained in a hyperplane $\{s|a^Ts=b\}$. This hyperplane must include the origin, and thus $b=0$. The elements in the hyperplane, $\{s|a^Ts=0\}$, are equivalent to the elements in $\{a^Tx=a^Ty|x\in C, y\in D\}$ and is already a separating hyperplane that we want to find.\\
If the interior of $S$ is nonempty, we construct a set $S_\epsilon = \{s|B(s,\epsilon)\subseteq S\}$ where $B(s,\epsilon)$ means the ball centered at $s$ with radius $\epsilon$. 
\begin{prop*}Given arbitrary $\epsilon >0$, $\textbf{cl}S_{\epsilon} \subseteq \textbf{int}S$
\end{prop*}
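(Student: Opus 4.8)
The plan is to prove the inclusion pointwise, using the sequence characterization of the closure together with a single triangle-inequality estimate. I would take an arbitrary point $p \in \textbf{cl}\,S_{\epsilon}$; the goal is to exhibit a radius $\delta > 0$ with $B(p,\delta) \subseteq S$, which is exactly what it means for $p$ to lie in $\textbf{int}\,S$.

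First I would invoke the sequence interpretation of the closure---the same tool already used above in the proof that the closure of a convex set is convex---to produce a sequence $s_k \in S_{\epsilon}$ with $s_k \to p$. By the condition defining $S_{\epsilon}$, each $s_k$ satisfies $B(s_k,\epsilon) \subseteq S$. Since $s_k \to p$, I can then fix an index $N$ for which $\|s_N - p\|_2 < \epsilon/2$.

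The heart of the argument is a radius-bookkeeping step: I claim $B(p,\epsilon/2) \subseteq B(s_N,\epsilon)$. Indeed, if $\|x - p\|_2 < \epsilon/2$, then by the triangle inequality $\|x - s_N\|_2 \leq \|x - p\|_2 + \|p - s_N\|_2 < \epsilon/2 + \epsilon/2 = \epsilon$, so $x \in B(s_N,\epsilon)$. Chaining the inclusions gives $B(p,\epsilon/2) \subseteq B(s_N,\epsilon) \subseteq S$, so $p$ is an interior point of $S$ with witnessing radius $\delta = \epsilon/2$. As $p$ was arbitrary in $\textbf{cl}\,S_{\epsilon}$, the desired inclusion $\textbf{cl}\,S_{\epsilon} \subseteq \textbf{int}\,S$ follows. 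If $S_{\epsilon}$ happens to be empty, the statement holds vacuously.

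I do not expect a genuine obstacle here; the one thing to watch is the constant in the triangle inequality. Using a half-$\epsilon$ buffer (rather than trying to use the full $\epsilon$) is precisely what guarantees that an entire open ball around $p$, and not merely the single point $p$, lands inside $S$. One should also stay consistent about whether $B$ denotes an open or a closed ball, but the estimate above is robust to either convention, since the strict inequality $<\epsilon$ is preserved in both cases.
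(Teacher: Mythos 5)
Your proof is correct, and while it shares the paper's key quantitative idea---a half-$\epsilon$ buffer---its structure is genuinely different and tighter. The paper's own proof splits into two cases ($x \in S_\epsilon$ versus $x \in \textbf{cl}\,S_\epsilon \setminus S_\epsilon$) and, in the second case, argues informally about distances to the boundary $\partial S$: it asserts one can find a point $x^*$ ``between $x$ and $\partial S$'' and that $B(x,\epsilon/2)$ stays at least $\epsilon/2$ away from $\partial S$, but never writes down the estimate that would justify these claims. You dispense with the case split entirely: the sequence characterization of the closure produces some $s_N \in S_\epsilon$ with $\|s_N - p\|_2 < \epsilon/2$ (this subsumes both of the paper's cases, since for $p \in S_\epsilon$ one may take the constant sequence), and your triangle inequality $\|x - s_N\|_2 \leq \|x - p\|_2 + \|p - s_N\|_2 < \epsilon$ is exactly the missing justification, giving $B(p,\epsilon/2) \subseteq B(s_N,\epsilon) \subseteq S$ in one line. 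What your route buys is rigor and economy: no appeal to $\partial S$ (whose properties were never established), every step an explicit inequality, and the empty-set and open-versus-closed-ball conventions accounted for. What the paper's route offers is only geometric suggestiveness; as written, its second case has a gap that your argument fills.
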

\begin{proof}Given arbitrary $\epsilon$, suppose we have $x\in \textbf{cl} S_\epsilon$.\\
 If we have $x\in S_\epsilon$, then, by construction of $S_\epsilon,$ we know there exists $B(x,\epsilon)\subseteq S$. This already gives us that $x\in \textbf{int}S$ since $x$ must be away from the boundary of $S$ with $\epsilon/2$. 
If we have $x\notin S_\epsilon$ but $x\in \textbf{cl}S_\epsilon$, this means that $x$ is on the boundary/limit points of the set $S_\epsilon$. In either case, by constructin of $S_\epsilon$, for $x$ on the boundary of $S_\epsilon,$ we are able to find a point $x^*$ between $x$ and $\partial S$ as the distance between them is $\epsilon$. Since $B(x,\frac{\epsilon}{2})\subseteq B(x,\epsilon)$, and $B(x,\frac{\epsilon}{2})$ is at least $\frac{\epsilon}{2}$ away from $\partial S$, $B(x,\frac{\epsilon}{2})\subseteq S$ and therefore $x\in \textbf{int}S$. 
\end{proof}
Since $0\notin \textbf{int}S$, then $0\notin \textbf{cl}S_{\epsilon}$. Since $\textbf{cl}S_\epsilon$ is closed and convex, by what we have proven in case 1, it is strictly separated from $\{0\}$ by a hyperplane for all $s\in S_\epsilon$, say $a(\epsilon)^Ts >0$. We now aim to make $\epsilon$ small enough so that $S_\epsilon$ approaches $\textbf{int}S$. Now let $\epsilon_k, k=1,2, \cdots$ be a sequence of positive values of $\epsilon_k$ with $\lim_{k\rightarrow \infty}\epsilon_k=0$. Then for each $k,$ we would have $a(\epsilon_k)^T s>0$ for all $s\in S_{\epsilon_k}$. Since every sequence has a convergent subsequence that converges to the same limit of the sequence, we let $\bar a$ denote the limit of the subsequence of $a(\epsilon_k)$. Then $\bar a^T s>0$ for all $s\in \textbf{int}S$ since $\lim_{k\rightarrow \infty}S_{\epsilon_k}$ is $\textbf{int}S$.

Thus we have $a^T s \geqslant 0$ for all $s\in S$. Therefore we proved \begin{center}$a^T x\geqslant a^T y$ for all $x\in C, y\in D$.\end{center}
\end{proof}
\begin{definition*}Suppose $C\subseteq \mathbb R^n$, and $x_0$ is a point in its boundary $\partial C$, if there exists $a\neq 0$ satisfying $a^T x\leqslant a^T x_0$ for all $x\in C$, then the hyperplane $\{x|a^Tx=a^Tx_0\}$ is called a supporting hyperplane to $C$ at point $x_0$.
\end{definition*}
This is equivalent to saying that the point $x_0$ and the set $C$ are separated by the hyperplane $\{x|a^Tx=a^Tx_0\}$ by what we have shown above. One significant theorem, the supporting hyperplane theorem, follows from the separating hyperplane theorem. However, as it is not the focus of this paper, the details of the proof can be found in \cite{Boyd}.
\section{Convex Functions}
In this section, we investigate convex functions, another major component of convex optimization problems. We will also prove the first and second order conditions for a function to be convex, which will be used later when we optimize the convex object functions.
\begin{definition*}A function $f: \mathbb R^n \rightarrow \mathbb R$ is convex if $\textbf{dom} f$ is a convex set and for all $x, y$ in the domain of $f$, and $\theta$ with $0\leqslant \theta \leqslant 1$, we have \begin{align}f(\theta x+(1-\theta)y) & \leqslant \theta f(x)+(1-\theta)f(y).
\end{align}
\end{definition*}
A function is strictly convex if strict inequality holds in (1) whenever $x\neq y$ and $0< \theta <1$. A function is concave if $-f$ is convex, and strictly concave if $-f$ is strictly convex.\\
\subsection{Operations that preserve convexity}
\begin{theorem}If $f$ is a convex function and $c>0$, then the function $cf$ is convex. If $f$ and $g$ are both convex and have the same domain, then so is their sum $f+g$.
\end{theorem}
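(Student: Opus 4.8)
The plan is to verify the two defining requirements of convexity---convexity of the domain and the defining inequality (1)---directly for each of the two functions, since both claims follow from the hypotheses by elementary manipulations. No clever construction is needed; the work is entirely in unwinding the definition.

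For the first claim, I would begin by observing that scaling by a positive constant does not change the domain, so $\textbf{dom}(cf) = \textbf{dom} f$, which is convex because $f$ is convex. Then, fixing arbitrary $x, y \in \textbf{dom} f$ and $\theta$ with $0 \leqslant \theta \leqslant 1$, I would apply the convexity of $f$ to get $f(\theta x + (1-\theta)y) \leqslant \theta f(x) + (1-\theta) f(y)$ and multiply both sides by $c$. The one point that requires care is the sign of $c$: because $c > 0$, multiplication preserves the direction of the inequality, yielding $(cf)(\theta x + (1-\theta)y) \leqslant \theta (cf)(x) + (1-\theta)(cf)(y)$. Had $c$ been negative, the inequality would reverse and $cf$ would be concave, which is precisely why the hypothesis $c > 0$ is needed.

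For the second claim, I would first note that since $f$ and $g$ share a common domain, $\textbf{dom}(f+g) = \textbf{dom} f = \textbf{dom} g$, which is convex. Then, for arbitrary $x, y$ in this common domain and $\theta$ with $0 \leqslant \theta \leqslant 1$, I would write down the two convexity inequalities for $f$ and for $g$ separately and add them. Because both inequalities point in the same direction, they add cleanly, and after regrouping the $\theta$ and $(1-\theta)$ terms I obtain $(f+g)(\theta x + (1-\theta)y) \leqslant \theta (f+g)(x) + (1-\theta)(f+g)(y)$, as required.

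Neither part presents a genuine obstacle; the proposition is a routine consequence of the definition. The only conceptual subtleties worth flagging are the role of the positivity of $c$ in the first part and the assumption of a common domain in the second---without the latter one would instead intersect the two domains, and would then have to confirm separately that this intersection is convex, which is already available here from Theorem \ref{thm:convexintersect}.
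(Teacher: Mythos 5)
Your proposal is correct and follows essentially the same route as the paper's proof: multiply the convexity inequality by $c$ (using $c>0$ to preserve the inequality direction) for the first claim, and add the two convexity inequalities for $f$ and $g$ for the second. Your explicit remarks about the domains are a small addition the paper omits, but they do not change the argument.
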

\begin{proof}Assume $f: \mathbb R^n\rightarrow \mathbb R$ is convex and $c>0$. Then $cf(\theta x+(1-\theta)y)\leqslant c(\theta f(x)+(1-\theta)f(y))=\theta cf(x)+(1-\theta)cf(y)$.\\
Now assume $f,g$ are both convex. Then \begin{align*}(f+g)(\theta x+(1-\theta)y) &= f(\theta x+(1-\theta)y)+g(\theta x+(1-\theta)y)\\
&\leqslant \theta f(x)+(1-\theta)f(y)+\theta g(x)+(1-\theta)g(y)\\
&=\theta(f+g)(x)+(1-\theta)(f+g)(y).
\end{align*}
\end{proof}
\begin{prop*}A function is an affine function if and only if it is both convex and concave.
\end{prop*}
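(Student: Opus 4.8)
The plan is to prove the two implications separately, treating $f$ as a real-valued function on $\mathbb R^n$, so that ``affine'' means $f(x)=a^Tx+b$ for some $a\in\mathbb R^n$ and $b\in\mathbb R$ (the $m=1$ case of the affine-function definition, with $A$ a row vector).

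For the forward direction I would assume $f(x)=a^Tx+b$ and compute $f(\theta x+(1-\theta)y)$ directly. Expanding the inner product gives $\theta a^Tx+(1-\theta)a^Ty+b=\theta(a^Tx+b)+(1-\theta)(a^Ty+b)$, which equals $\theta f(x)+(1-\theta)f(y)$ \emph{with equality}. Since equality satisfies both $\leqslant$ (so $f$ is convex) and $\geqslant$ (so $-f$ is convex, i.e.\ $f$ is concave), this direction is immediate.

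For the converse, the first step is to combine the hypotheses. Convexity gives $f(\theta x+(1-\theta)y)\leqslant\theta f(x)+(1-\theta)f(y)$ and concavity gives the reverse inequality, so together they force the affinity equation
\[
f(\theta x+(1-\theta)y)=\theta f(x)+(1-\theta)f(y)
\]
for all $x,y$ and all $\theta\in[0,1]$. I would then set $b=f(0)$ and define $g(x)=f(x)-b$, so that $g(0)=0$ and $g$ inherits the affinity equation. The goal reduces to showing $g$ is linear. Putting $y=0$ yields $g(\theta x)=\theta g(x)$ for $\theta\in[0,1]$, and taking $\theta=\tfrac12$ together with this homogeneity (using $\tfrac12 x+\tfrac12 y=\tfrac12(x+y)$) yields additivity $g(x+y)=g(x)+g(y)$.

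The main obstacle is upgrading from this restricted homogeneity to full $\mathbb R$-linearity \emph{without} invoking continuity, since additive functions need not be linear in general. The key trick is that homogeneity on $[0,1]$ already propagates to all scalars: for $\theta>1$ I would write $g(x)=g\big(\tfrac1\theta(\theta x)\big)=\tfrac1\theta g(\theta x)$ using $\tfrac1\theta\in(0,1)$, which rearranges to $g(\theta x)=\theta g(x)$; additivity gives $g(-x)=-g(x)$, extending homogeneity to all real $\theta$. Once $g$ is additive and fully homogeneous, expanding $x=\sum_i x_i e_i$ in the standard basis gives $g(x)=\sum_i x_i g(e_i)=a^Tx$ with $a_i=g(e_i)$, so $f(x)=a^Tx+b$ is affine. (If one does not assume $\textbf{dom}\,f=\mathbb R^n$, a preliminary translation placing $0$ in the domain handles a general convex domain, though the cleanest statement takes the domain to be all of $\mathbb R^n$.)
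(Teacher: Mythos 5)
Your proof is correct, and its skeleton matches the paper's: both reduce the converse to showing that $g(x)=f(x)-f(0)$ is linear, and both escape the interval $[0,1]$ by applying the established identity to the scalar $1/\theta\in(0,1)$ when $\theta>1$. The decomposition differs in a meaningful way, though. The paper first extends the full two-point identity $g(\theta x+(1-\theta)y)=\theta g(x)+(1-\theta)g(y)$ from $\theta\in[0,1]$ to all real $\theta$, treating $\theta>1$ by solving for $x$ in terms of $a=\theta x+(1-\theta)y$ and $y$, and treating $\theta<0$ by a second, separate rearrangement with coefficient $k=\theta/(\theta-1)\in[0,1]$; it then invokes a standalone lemma asserting that this identity for all real $\theta$ together with $g(0)=0$ characterizes linearity (the lemma itself re-derives homogeneity and additivity by special choices of $x$, $y$, $\theta$). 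You instead extract additivity and $[0,1]$-homogeneity immediately from the $[0,1]$-identity, upgrade homogeneity to $\theta>1$ by the $1/\theta$ trick, and obtain negative scalars for free from oddness ($g(-x)=-g(x)$ via additivity), which replaces the paper's second case analysis; you also finish more explicitly by setting $a_i=g(e_i)$ to exhibit $f(x)=a^Tx+b$. Your route is somewhat more economical (one rearrangement instead of two), and your remark that it is full $\mathbb{R}$-homogeneity, not additivity alone, that rules out Hamel-basis pathologies makes explicit a point the paper leaves unsaid. Both proofs share the same mild gloss on domains: stepping outside $\theta\in[0,1]$ requires points such as $\theta x$ or $\theta x+(1-\theta)y$ to remain in $\textbf{dom}\,f$, which is automatic only when the domain is all of $\mathbb{R}^n$ (or an affine subspace of it), a caveat you flag explicitly and the paper handles only by its translation remark.
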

My own proof:
\begin{proof}Recall a function is affine if it has the form $f(x)=Ax+b$. \\
($\Rightarrow$)Now \begin{align*}f(\theta x+(1-\theta)y)&=A(\theta x+(1-\theta)y)+b\\
&=\theta Ax+(1-\theta)Ay+b. (*)
\end{align*}
Also, \begin{align*}\theta f(x)+(1-\theta)f(y)&=\theta (Ax+b)+(1-\theta)(Ay+b)\\
&=\theta Ax+\theta b+(1-\theta)Ay+b-\theta b\\
&=\theta Ax+(1-\theta)Ay+b=*.
\end{align*}
We know that a function is convex if $f(\theta x+(1-\theta)y) \leqslant \theta f(x)+(1-\theta)f(y)$ and concave if $f(\theta x+(1-\theta)y) \geqslant \theta f(x)+(1-\theta)f(y).$ Then, because both inequalities always hold for an affine function (as we always have equality), an affine function is both convex and concave. \\
($\Leftarrow$)We begin by proving a lemma. \begin{lemma*}A function $f$ is linear if and only if for all $x,y \in \textbf{dom}f, \theta \in \mathbb R$, we have $f(\theta x+(1-\theta)y)= \theta f(x)+(1-\theta)f(y), and f(0)=0.$\end{lemma*}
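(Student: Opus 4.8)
The plan is to establish both implications, with the forward direction reducing to a direct substitution and the backward direction hinging on recovering the two defining properties of a linear map---homogeneity and additivity---separately from the single affine-combination hypothesis.

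For the forward direction, I would take $f$ linear, so that $f(x) = Ax$ for some matrix $A$, and substitute directly: $f(0) = A \cdot 0 = 0$, and $f(\theta x + (1-\theta)y) = A(\theta x + (1-\theta)y) = \theta Ax + (1-\theta)Ay = \theta f(x) + (1-\theta)f(y)$. Both required conditions then hold with no further work, so this direction is routine.

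For the backward direction I assume the identity $f(\theta x + (1-\theta)y) = \theta f(x) + (1-\theta)f(y)$ holds for all real $\theta$ (working over $\textbf{dom} f = \mathbb{R}^n$, so that every such combination lies in the domain) together with $f(0) = 0$, and I aim to verify homogeneity $f(\alpha x) = \alpha f(x)$ and additivity $f(x+y) = f(x)+f(y)$. First I would extract homogeneity by specializing the hypothesis at $y = 0$: since $f(0) = 0$, it collapses to $f(\theta x) = \theta f(x)$ for every $\theta \in \mathbb{R}$ and every $x$.

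The main obstacle is then obtaining additivity, because the hypothesis only ever supplies combinations whose coefficients sum to one, whereas the sum $x+y$ corresponds to coefficients summing to two. The device I would use is to take $\theta = \tfrac{1}{2}$, yielding $f(\tfrac{1}{2}x + \tfrac{1}{2}y) = \tfrac{1}{2}f(x) + \tfrac{1}{2}f(y)$, and then rewrite the left-hand side via the homogeneity already proved as $f(\tfrac{1}{2}(x+y)) = \tfrac{1}{2}f(x+y)$. Equating these two expressions and clearing the factor of $\tfrac{1}{2}$ gives $f(x+y) = f(x) + f(y)$. With both homogeneity and additivity in hand, $f$ qualifies as linear, which completes the argument.
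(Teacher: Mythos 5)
Your proof is correct and follows essentially the same route as the paper's: both directions match, with homogeneity extracted by setting $y=0$ and additivity obtained from the $\theta=\tfrac{1}{2}$ instance combined with homogeneity (you apply homogeneity to rescale $f\bigl(\tfrac{1}{2}(x+y)\bigr)$ on the left, while the paper equivalently feeds in $2x_1, 2x_2$ and rescales on the right). No gaps; nothing further needed.
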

\begin{proof}The foward direction is straightforward, as if $f$ is linear then by definition of linearity we have $f(\theta x+(1-\theta)y)=f(\theta x)+f((1-\theta)y)=\theta f(x)+(1-\theta)f(y)$.\\

We then focus on backward direction. Recall $f$ is linear only if given arbitrary $x_1,x_2\in \textbf{dom}f, c\in \mathbb R$, $f(cx_1)=cf(x_1)$ and $f(x_1+x_2)=f(x_1)+f(x_2)$. We first show the ``close under multiplication criterion." Suppose for all $x,y \in \textbf{dom}f, \theta \in \mathbb R$, we have $f(\theta x+(1-\theta)y)= \theta f(x)+(1-\theta)f(y)$ and f(0)=0. Given arbitrary $x_1\in \textbf{dom}f, c\in \mathbb R$, we pick $x=x_1, \theta=c, y=0$, then we have $f(cx_1+(1-c)0)=f(cx_1)=cf(x_1)+(1-c)f(0)=cf(x_1)$. We then have $f$ close under multiplication.\\
Gvien arbitrary $x_1,x_2\in \textbf{dom}f$, we pick $\theta=\frac{1}{2}, x=2x_1, y=2x_2$, then we have $f(\frac{1}{2} 2 x_1+(1-\frac{1}{2})2x_2)=f( x_1+x_2)=\frac{1}{2}f(2x_1)+(1-\frac{1}{2})f(2x_2)=\frac{1}{2}2 f(x_1)+\frac{1}{2}2f(x_2)=f(x_1)+f(x_2)$. Note in the last step we applied $f$ being closed under multiplication that we have just proven. This gives us $f$ close under addition.
\end{proof}
Recall a function is affine if it is a sum of a linear function and a constant. Let $g(x)=f(x)-f(0)$. Since $f(0)$ is simply a constant, it's sufficient to prove $g(x)$ is linear in order to prove $f$ is affine (note that if $0$ is not in the domain of $f$, one can simply twick the construction of $g$ by letting $g(x)=f(x+x_0)-f(x_0)$ for some $x_0\in \textbf{dom}f$ and everything else remains the same). By our construction and theorem 5, $g$ is both convex and concave and satisfies $g(0)=f(0)-f(0)=0$. In order to prove $g$ is linear, it's sufficient to prove \begin{center}$g(\theta x+(1-\theta)y)= \theta g(x)+(1-\theta)g(y).$ (*)
\end{center} holds for all $\theta\in \mathbb R$.
Since the function $g$ is both convex and concave, given arbitrary $x,y\in \textbf{dom} g$, and $\theta \in [0,1]$, we have \begin{center}$g(\theta x+(1-\theta)y)= \theta g(x)+(1-\theta)g(y).$ (*)
\end{center}
We already finished proving for $\theta \in (0,1)$. Now suppose $\theta >1$. Consider $a=\theta x+(1-\theta)y$. If we solve $x$ in terms of $a$ and $y$, we get \begin{center}$x=\frac{1}{\theta} a+(1-\frac{1}{\theta})y$.\end{center}Since $\theta>1$, we then have $\frac{1}{\theta} \in [0,1]$. Thus by definition of convexity and concavity of $g$, we have \begin{center}$g(x)=g(\frac{1}{\theta} a+(1-\frac{1}{\theta})y)=\frac{1}{\theta} g(a)+(1-\frac{1}{\theta})g(y)$.
\end{center}
Rearrange this equation and substitute $a=\theta x+(1-\theta)y$ back, we then get \begin{center}$g(\theta x+(1-\theta)y)= \theta g(x)+(1-\theta)g(y)$ for $\theta >1$.\end{center}
Now suppose $\theta <0$. Still consider $a=\theta x+(1-\theta)y$. This time we solve $y$ in terms of $x$ and $a$. We get \begin{center}$y=kx+(1-k)a$ where $k=\frac{\theta}{\theta -1}$.\end{center} Note since $\theta <0$, we have $k=\frac{\theta}{\theta-1} \in [0,1]$. The rest part of the proof follows the similar logic as in the case $\theta >1$. Therefore we proved \begin{center}$g(\theta x+(1-\theta)y)= \theta g(x)+(1-\theta)g(y)$ for all $\theta \in \mathbb R.$\end{center}
\end{proof}
\subsection{First-order conditions}
\begin{theorem}
Suppose $f$ is differentiable. Then $f$ is convex if and only if its domain is convex and \begin{align} f(y) &\geqslant f(x)+\nabla f(x)^T(y-x) \end{align} holds for all $x,y$ in the domain of $f$.
\label{thm:firstorder}
\end{theorem}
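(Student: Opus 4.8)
The plan is to prove the two directions separately: I would handle the harder ``$f$ convex $\Rightarrow$ inequality (2)'' direction by restricting $f$ to a line and reducing it to a one-variable limit, and the converse directly in $\mathbb R^n$ by a cancellation trick. In both directions I may assume $\textbf{dom} f$ is convex: in the forward direction this is part of the definition of a convex function, and in the converse it is given as a hypothesis. This guarantees that for any $x, y \in \textbf{dom} f$ the entire segment $\{x + t(y-x) : t \in [0,1]\}$ lies in the domain, so every point at which I will evaluate $f$ is legitimate.

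For the forward direction, I would fix $x, y \in \textbf{dom} f$ and, for $\theta \in (0,1]$, apply convexity to $x$ and $y$ to obtain
\begin{align*}
f(x + \theta(y-x)) = f\bigl((1-\theta)x + \theta y\bigr) \leq (1-\theta)f(x) + \theta f(y).
\end{align*}
Subtracting $f(x)$ and dividing by $\theta$ turns this into the difference quotient
\begin{align*}
\frac{f(x + \theta(y-x)) - f(x)}{\theta} \leq f(y) - f(x),
\end{align*}
and I would then let $\theta \to 0^+$. The left-hand side converges to the directional derivative of $f$ at $x$ along $y-x$, which by the chain rule equals $\nabla f(x)^T(y-x)$; passing to the limit gives $\nabla f(x)^T(y-x) \leq f(y) - f(x)$, i.e.\ inequality (2).

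For the converse, I would assume (2) holds for all pairs of points, fix $x, y \in \textbf{dom} f$ and $\theta \in [0,1]$, and introduce the intermediate point $z = \theta x + (1-\theta)y$, which lies in $\textbf{dom} f$ by convexity of the domain. Applying (2) with base point $z$ to each of $x$ and $y$ gives
\begin{align*}
f(x) &\geq f(z) + \nabla f(z)^T(x - z), \\
f(y) &\geq f(z) + \nabla f(z)^T(y - z).
\end{align*}
Multiplying the first by $\theta$, the second by $1-\theta$, and adding, the gradient contributions collapse to $\nabla f(z)^T\bigl(\theta x + (1-\theta)y - z\bigr) = 0$, leaving $\theta f(x) + (1-\theta)f(y) \geq f(z)$, which is precisely the definition of convexity.

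I expect the forward direction's limit step to be the main obstacle: one must justify that the difference quotient genuinely converges to $\nabla f(x)^T(y-x)$, and this is exactly where differentiability of $f$ is needed. The clean way to make it rigorous is to introduce the single-variable restriction $g(t) = f(x + t(y-x))$, observe that $g'(0) = \nabla f(x)^T(y-x)$ by the chain rule, and carry out the limit argument on $g$. The converse, by contrast, is purely algebraic once $z$ is introduced and needs no limiting argument at all.
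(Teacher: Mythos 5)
Your proposal is correct and follows essentially the same route as the paper's proof: the forward direction restricts $f$ to the segment, forms the difference quotient, and identifies its limit as $\nabla f(x)^T(y-x)$ via the one-variable function $g(t)=f(x+t(y-x))$, while the converse introduces $z=\theta x+(1-\theta)y$, applies inequality (2) at $z$ to both $x$ and $y$, and lets the gradient terms cancel. The only cosmetic difference is that the paper perturbs from $y$ toward $x$ and then swaps the roles of $x$ and $y$ at the end, whereas you perturb from $x$ toward $y$ and land on the stated inequality directly.
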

My own proof:
\begin{proof}$(\Rightarrow)$: Assume $f$ is convex. By definition, we then have its domain a convex set and for all $x, y \in \textbf{dom}f$, and $\theta$ with $0\leqslant \theta \leqslant 1$, we have \begin{center} $f(\theta x+(1-\theta)y) \leqslant \theta f(x)+(1-\theta)f(y)$.\end{center}
Now we arrange the equation above: \begin{align*}f(\theta x+(1-\theta)y) &\leqslant \theta f(x)+(1-\theta)f(y)\\
f(y+\theta (x-y)) &\leqslant \theta(f(x)-f(y))+f(y)\\
f(y)+\frac{f(y+\theta(x-y))-f(y)}{\theta} &\leqslant f(x).
\end{align*}
We now define $g(\theta)=f(y+\theta(x-y))$. Then $g(0)=f(y)$ and $g$ is differentiable since it consists of linear composition of $f$. If we use $g$ to express the inequality above we have \begin{center}$f(y)+\frac{g(\theta)-g(0)}{\theta}\leqslant f(x)$.\end{center}Since $g$ is differentiable, we can take limit as $\theta \rightarrow 0$ and get \begin{center}$f(y)+\lim\limits_{\theta \to 0} \frac{g(\theta)-g(0)}{\theta}\leqslant f(x)$.\end{center}
\begin{center}$f(y)+g^\prime (0)\leqslant f(x)$.\end{center}
Since $g' (\theta)=(\nabla_y f(y+\theta(x-y)))^T (x-y)$, we then have \begin{center}$g' (0)=(\nabla_y (f(y))^T(x-y)$.\end{center}
Therefore substitute this back we then have \begin{center}$f(x)\geqslant f(y)+\nabla f(y)^T(x-y)$.\end{center} Since this inequality holds for any $x,y$ in the domain of $f$, we can simply switch $x$ and $y$, which corresponds with the theorem.\\
$(\Leftarrow)$: Define $z=\theta x+ (1-\theta)y$ where $\theta \in [0,1]$. Since $x,y$ in the domain of $f$ and the domain is convex, $z$ must also be in the domain of $f$ by convexity of sets. Since we also have the inequality in (2) for any points in the domain of $f$, we then have the following inequality: \begin{center}$f(x)\geqslant f(z)+ \nabla f(z)^T (x-z)$.\end{center}
Since $\theta \geqslant 0$, we can multiply both sides by $\theta$ and get \begin{align}\theta f(x) &\geqslant \theta f(z)+\theta \nabla f(z)^T (x-z).\end{align}
Similarly, we also have \begin{center}$f(y)\geqslant f(z)+ \nabla f(z)^T (y-z)$.\end{center} Since $\theta \leqslant 1$, we can multiply both sides by $(1-\theta)$ and get \begin{align}(1-\theta) f(y) &\geqslant (1-\theta) f(z)+(1-\theta) \nabla f(z)^T (y-z).\end{align}
Adding (3) and (4) we then have \begin{align*}\theta f(x)+(1-\theta)f(y) &\geqslant \theta f(z)-\theta f(z)+f(z)+\theta \nabla f(z)^T (x-z)+ \nabla f(z)^T (y-z)-\theta \nabla f(z)^T (y-z)\\
&= f(z)+\nabla f(z)^T (\theta (x-z)+(y-z)-\theta (y-z))\\
&= f(z)+\nabla f(z)^T (\theta x+(1-\theta)y-z)\\
&= f(z)+\nabla f(z)^T (z-z)\\
&=f(z)=f(\theta x+(1-\theta)y)
\end{align*}We thus finished proving both directions.
\end{proof}
\subsection{Second-order conditions}
\begin{theorem}Assume $f$ is twice differentiable, then $f$ is convex if and only if its domain is convex and for all $x\in \textbf{dom}f$, we have \begin{center}$\nabla^2 f(x)\geqslant 0$.\end{center}
\label{thm:secondorder}
\end{theorem}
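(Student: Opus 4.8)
The plan is to use the first-order characterization of convexity (Theorem~\ref{thm:firstorder}) as a bridge between the definition of convexity and the Hessian condition, so that I never argue directly from the defining inequality of a convex function. Both implications then reduce to that theorem together with standard facts about differentiation and Taylor expansion. Throughout I read ``$\nabla^2 f(x)\geqslant 0$'' as the statement that the Hessian is positive semidefinite, i.e. $v^T\nabla^2 f(x)\,v\geqslant 0$ for every $v\in\mathbb R^n$, and I take $\textbf{dom}f$ to be open so that from any point I may move a little in every direction.

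For the reverse direction, suppose $\textbf{dom}f$ is convex and $\nabla^2 f(z)\geqslant 0$ for all $z$. Given arbitrary $x,y\in\textbf{dom}f$, convexity of the domain guarantees that the whole segment from $x$ to $y$ lies in $\textbf{dom}f$, so I may apply Taylor's theorem with second-order (Lagrange) remainder: there is a point $z$ on that segment with
\[
f(y)=f(x)+\nabla f(x)^T(y-x)+\tfrac12\,(y-x)^T\nabla^2 f(z)(y-x).
\]
Because $\nabla^2 f(z)$ is positive semidefinite, the quadratic term is nonnegative, which yields exactly the first-order inequality $f(y)\geqslant f(x)+\nabla f(x)^T(y-x)$. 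Since this holds for all $x,y$, Theorem~\ref{thm:firstorder} lets me conclude that $f$ is convex.

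For the forward direction, suppose $f$ is convex; its domain is then convex by definition, and I must show the Hessian is positive semidefinite. Applying the first-order inequality of Theorem~\ref{thm:firstorder} at the pair $(x,y)$ and again at $(y,x)$ and adding the two gives the monotonicity relation $(\nabla f(y)-\nabla f(x))^T(y-x)\geqslant 0$. Now fix $x$ and a direction $v$, set $y=x+tv$ for small $t>0$, substitute, and divide by $t^2>0$ to obtain
\[
\frac{\bigl(\nabla f(x+tv)-\nabla f(x)\bigr)^T v}{t}\geqslant 0.
\]
Letting $t\to 0^+$, the left-hand side converges to $v^T\nabla^2 f(x)\,v$ because $\nabla f$ is differentiable with derivative $\nabla^2 f$; hence $v^T\nabla^2 f(x)\,v\geqslant 0$ for every $v$, which is the desired positive semidefiniteness.

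I expect the forward direction to be the delicate part. Two points need care: first, that $x+tv$ actually lies in the domain for small $t$, which is why I assume $\textbf{dom}f$ open (at a boundary point the Hessian condition can genuinely fail to be forced); and second, the justification that the difference quotient of the gradient tends to $v^T\nabla^2 f(x)\,v$, which is precisely the statement that $\nabla f$ is differentiable at $x$ with derivative $\nabla^2 f(x)$ --- guaranteed here only because $f$ is assumed twice differentiable. The reverse direction, by contrast, is essentially mechanical once Taylor's theorem is invoked.
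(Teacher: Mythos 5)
Your proof is correct, but it takes a genuinely different route from the paper's. The paper first proves the theorem for $f:\mathbb{R}\to\mathbb{R}$ (necessity via monotonicity of $f'$ and a difference quotient, sufficiency via one-variable Taylor with Lagrange remainder), and then lifts the result to $\mathbb{R}^n$ through a separate lemma: $f$ is convex if and only if every restriction to a line, $g(t)=f(x+tv)$, is convex, so that the Hessian condition appears as $g''(t)=v^T\nabla^2 f(x+tv)\,v\geqslant 0$. You instead work directly in $\mathbb{R}^n$: for sufficiency you invoke the multivariate Taylor theorem with Lagrange remainder on the segment from $x$ to $y$, and for necessity you derive the gradient monotonicity $(\nabla f(y)-\nabla f(x))^T(y-x)\geqslant 0$ from two applications of Theorem~\ref{thm:firstorder} and pass to the limit in the directional difference quotient. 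Both arguments rest on the same two pillars --- the first-order condition and Taylor's theorem --- so neither is more elementary in substance; yours is shorter because it outsources the reduction-to-lines work to the multivariate Taylor theorem (whose standard proof is itself a restriction to the segment), while the paper's restriction lemma is a reusable tool that keeps all the calculus one-dimensional. Your explicit caveats --- that $\textbf{dom}f$ should be open so that $x+tv$ stays in the domain for small $t$, and that twice differentiability is precisely what makes the difference quotient of $\nabla f$ converge to $v^T\nabla^2 f(x)\,v$ --- address points the paper leaves implicit, and they are handled correctly.
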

$\nabla^2 f(x)$ here stands for the second derivative (if the domain of $f$ is in $\mathbb R$) or the Hessian matrix (domain of $f$ is in $\mathbb R^n$)of $f$, and "$\geqslant$" means that the Hessian matrix is positive semidefinite.\\

My own proof:
\begin{proof}We first consider the case $f: \mathbb R \rightarrow \mathbb R$.\\
($\Rightarrow$): Suppose $f$ is convex. Given arbitrary $x,y \in \textbf{dom}f$ with $y>x$, by first order condition we have \begin{align*}f(y)&\geqslant f(x)+f' (x) (y-x)\\
f(x)&\geqslant f(y)+f'(y) (x-y).\end{align*}
Rearrange the two inequalities we have  \begin{align*}f(y)-f(x)&\geqslant f' (x) (y-x)\\
f(y)-f(x)&\leqslant f'(y) (y-x).\end{align*}
Therefore \begin{center}$f' (y)(y-x)\geqslant f(y)-f(x)\geqslant f' (x)(y-x)$.\end{center}
\begin{center}$\rightarrow f' (y)(y-x)-f'(x)(y-x)\geqslant 0$.\end{center}
Since $y>x$, dividing both sides by $(y-x)^2$ we have \begin{center}$\frac{f' (y)-f' (x)}{y-x}\geqslant 0$.\end{center}
Let $y\rightarrow x$, we get \begin{center}$f^{''} (x) \geqslant 0,  \forall x\in \textbf{dom}f$.\end{center}
($\Leftarrow$): Suppose $f^{''} (x)\geqslant 0$ for all $x\in \textbf{dom}f$. Given arbitrary $x,y$ in $\textbf{dom}f$ with $x\leqslant y$, by the Taylor's theorem, for some $z\in [x,y]$, we have \begin{center}$f(y)=f(x)+f^{'}(x)(y-x)+\frac{1}{2}f^{''}(z)(y-x)^2$. 
\end{center}
Since $y-x\geqslant 0$ and by assumption $f^{''}(z)\geqslant 0$, we have \begin{center}$f(y)\geqslant f(x)+f^{'} (x)(y-x)$.\end{center}
By the first-order condition of convexity we know $f$ is then convex.\\
We proved the theorem for $f:\mathbb R \rightarrow \mathbb R$. We now generalize this to higher dimension, where $f:\mathbb R^n \rightarrow \mathbb R$. We start by proving a lemma.
\begin{prop*}A function $f:\mathbb R^n \rightarrow \mathbb R$ is convex if and only if the function $g:\mathbb R\rightarrow \mathbb R$, given by $g(t)=f(x+ty)$ is convex, for all $x$ in domain of $f$, all $y\in \mathbb R^n$.
\end{prop*}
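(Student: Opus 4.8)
The plan is to prove both directions from a single algebraic observation: a point of the form $x + (\theta t_1 + (1-\theta)t_2)y$ can be rewritten as the convex combination $\theta(x+t_1 y) + (1-\theta)(x+t_2 y)$ of the two points $x+t_1 y$ and $x+t_2 y$. Once this identity is in hand, the convexity inequality for $f$ and the convexity inequality for each restriction $g$ translate into one another almost immediately.

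For the forward direction, I would fix $x \in \textbf{dom} f$ and $y \in \mathbb{R}^n$ and consider $g(t) = f(x+ty)$ on its natural domain $\{t : x+ty \in \textbf{dom} f\}$. First I would check that this domain is convex: it is exactly the set of $t$ for which the affine map $t \mapsto x + ty$ lands in the convex set $\textbf{dom} f$, and one verifies directly that if $t_1$ and $t_2$ qualify then so does every $\theta t_1 + (1-\theta)t_2$. Then, for such $t_1, t_2$ and $\theta \in [0,1]$, I would apply the identity above together with the convexity inequality for $f$ to obtain $g(\theta t_1 + (1-\theta)t_2) \leqslant \theta g(t_1) + (1-\theta)g(t_2)$, which is precisely the convexity of $g$.

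For the backward direction, suppose every such $g$ is convex. Given arbitrary $x_1, x_2 \in \textbf{dom} f$ and $\theta \in [0,1]$, I would specialize to the line through these points by taking $x = x_1$ and $y = x_2 - x_1$, so that $g(0) = f(x_1)$ and $g(1) = f(x_2)$. Since $g$ is convex, its domain is convex and hence contains $[0,1]$, which shows the whole segment $x_1 + t(x_2 - x_1)$, $t \in [0,1]$, lies in $\textbf{dom} f$; this recovers convexity of $\textbf{dom} f$. Evaluating at $t = 1-\theta$ gives $g(1-\theta) = f(\theta x_1 + (1-\theta)x_2)$, and the convexity inequality for $g$ applied to the combination $(1-\theta)\cdot 1 + \theta \cdot 0$ yields $f(\theta x_1 + (1-\theta)x_2) \leqslant \theta f(x_1) + (1-\theta)f(x_2)$, which is the convexity of $f$.

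The computations themselves are routine; the only point needing genuine care is the bookkeeping of domains, namely remembering that the statement ``$g$ is convex'' carries with it the requirement that $\textbf{dom} g$ be a convex set, and using exactly that fact to recover convexity of $\textbf{dom} f$ in the backward direction. That is the step I expect to be the main obstacle, since it is easy to verify the defining inequality while silently assuming the domain condition rather than deducing it.
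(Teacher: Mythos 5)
Your proposal is correct and rests on exactly the same key identity as the paper's proof, namely $x+(\theta t_1+(1-\theta)t_2)y=\theta(x+t_1y)+(1-\theta)(x+t_2y)$, with the convexity inequalities for $f$ and $g$ translated into one another; the paper merely phrases the forward direction as a contrapositive, which is logically the same argument. If anything, your version is slightly more complete: you make explicit the specialization $x=x_1$, $y=x_2-x_1$, $t\in[0,1]$ that converts convexity along lines into convexity of $f$ at arbitrary points, and you track convexity of $\textbf{dom}\,g$ and $\textbf{dom}\,f$, both of which the paper leaves implicit.
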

\begin{proof}
($\Rightarrow$)We prove by contrapositive. Suppose for some $x,y$, $g(t)$ is not convex. Then there exists $\theta\in [0,1], t_1, t_2$ such that $g(\theta t_1+(1-\theta)t_2)>\theta g(t_1)+(1-\theta)g(t_2)$. Therefore \begin{align*} f(x+(\theta t_1+(1-\theta)t_2)y)&=f(\theta(x+t_1 y)+(1-\theta)(x+t_2 y))\\
&>\theta f(x+t_1y)+(1-\theta)f(x+t_2y),\end{align*} which then implies $f$ not convex.\\
($\Leftarrow$). Suppose for $x\in \textbf{dom}f$ and $y\in \mathbb R^n$, we have $g(t)=f(x+ty)$ is convex. Then $g(\theta t_1+(1-\theta)t_2)\leqslant \theta g(t_1)+(1-\theta)g(t_2)$. We then have \begin{align*}
f(x+(\theta t_1+(1-\theta)t_2)y)&= f(\theta(x+t_1 y)+(1-\theta)(x+t_2 y))\\
&\leqslant \theta f(x+t_1y)+(1-\theta)f(x+t_2y),
\end{align*} which implies $f$ convex.
\end{proof}
Thus by our lemma, $f:\mathbb R^n\rightarrow \mathbb R$ is convex if and only if $g(t)=f(x_0+tv)$ is convex for all $x_0\in \textbf{dom}f$ and $v\in \mathbb R^n$. By our proof for the base case, since $g: \mathbb R\rightarrow \mathbb R$, we know $g(t)$ is convex if and only if $g^{''}(t)=v^T\nabla^2 f(x_0+tv)v \geqslant 0$ (by chain rule). Then we have $f$ convex if and only if \begin{center}$v^T\nabla^2 f(x_0+tv)v\geqslant 0$.\end{center} for all $x_0\in \textbf{dom}f, v\in \mathbb R^n$ and $\{t|x_0+t v \in \textbf{dom}f\}$.\\
In other words, $f$ is convex if and only if the Hessian matrix of $f$ is positive semi-definite for all $x\in \textbf{dom}f$. 
\end{proof}
Below is an example using the second order condition in order to prove a set being convex.
\begin{example}$\{x\in \mathbb R^n_+| \Pi^n_{i=1} x_i\geqslant 1\}$ is convex. \label{example:product}
\end{example}
\begin{proof}
We begin by first proving a proposition.
\begin{prop*}If $a,b\geqslant 0$ and $\theta \in [0,1]$, then $a^\theta b^{1-\theta}\leqslant \theta a+(1-\theta)b$.
\end{prop*}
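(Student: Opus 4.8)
The plan is to recognize this two-term weighted arithmetic–geometric mean inequality as the convexity inequality for the exponential function in disguise, and to establish that convexity using the second-order condition (Theorem~\ref{thm:secondorder}) already proved above. First I would dispose of the degenerate cases. If $\theta = 0$ or $\theta = 1$, both sides coincide and the inequality holds with equality. If $a = 0$ or $b = 0$ while $\theta \in (0,1)$, then the left-hand side $a^\theta b^{1-\theta}$ equals $0$, whereas the right-hand side $\theta a + (1-\theta)b$ is a nonnegative combination of nonnegative reals, so the inequality is immediate. This reduces everything to the main case $a, b > 0$ and $\theta \in (0,1)$, where in particular $\ln a$ and $\ln b$ are defined.

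For the main case, I would first record that $\exp : \mathbb R \to \mathbb R$ is convex. Since $\exp$ is twice differentiable with $\exp'' = \exp$, and $e^x > 0$ for every $x$, the second-order condition (Theorem~\ref{thm:secondorder}) yields convexity at once; its domain $\mathbb R$ is trivially convex.

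Next I would use that $a, b > 0$ permits the rewriting $a = e^{\ln a}$ and $b = e^{\ln b}$. The key observation is that the exponent of the geometric mean is exactly a convex combination of $\ln a$ and $\ln b$:
\[
a^\theta b^{1-\theta} = e^{\theta \ln a} \, e^{(1-\theta)\ln b} = e^{\theta \ln a + (1-\theta)\ln b}.
\]
Applying the definition of convexity of $\exp$ to the convex combination $\theta \ln a + (1-\theta)\ln b$ then gives
\[
e^{\theta \ln a + (1-\theta)\ln b} \leqslant \theta e^{\ln a} + (1-\theta) e^{\ln b} = \theta a + (1-\theta) b,
\]
which is precisely the claimed inequality.

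I do not expect a serious obstacle: the entire content is the reinterpretation of the inequality as the convexity estimate for $\exp$. The only points demanding care are the bookkeeping of the boundary cases where $\ln$ is undefined (handled separately at the outset) and the justification of the exponent rewriting $a^\theta b^{1-\theta} = e^{\theta \ln a + (1-\theta)\ln b}$, which rests on the standard identities $a^\theta = e^{\theta \ln a}$ and $e^{u} e^{v} = e^{u+v}$, valid since $a, b > 0$. An equivalent route would invoke the concavity of $\ln$ together with its monotonicity, but the exponential formulation is cleaner here because it lets me apply the convexity definition directly rather than passing through a concave function.
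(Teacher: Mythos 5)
Your proof is correct, but it routes through a different key function than the paper does. The paper defines $f(x) = -\log x$ on $\mathbb R^+$, checks $f''(x) = 1/x^2 > 0$ to get convexity from the second-order condition, writes out the convexity inequality $-\log(\theta x + (1-\theta)y) \leqslant -\theta\log x - (1-\theta)\log y$, collapses the right side via $\theta\log x + (1-\theta)\log y = \log(x^\theta y^{1-\theta})$, and then needs one more step --- monotonicity of $\log$ --- to strip the logarithms and conclude. You instead verify convexity of $\exp$ (again by the second-order condition, since $\exp'' = \exp > 0$), rewrite $a^\theta b^{1-\theta} = e^{\theta\ln a + (1-\theta)\ln b}$, and apply the convexity definition directly at the points $\ln a$ and $\ln b$; no inversion or monotonicity argument is needed, since the exponential lands you on the desired inequality immediately. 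The two arguments are dual to one another (concavity of $\log$ versus convexity of $\exp$), and both lean on Theorem~\ref{thm:secondorder} in exactly the same way, but yours is slightly tighter in structure. You also gain something concrete at the boundary: the proposition as stated allows $a, b \geqslant 0$, and the paper's proof silently assumes $x, y > 0$ (otherwise $\log$ is undefined), whereas you dispose of the cases $a = 0$, $b = 0$, and $\theta \in \{0,1\}$ explicitly before passing to logarithms. That bookkeeping closes a small gap the paper leaves open.
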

\begin{proof}If we define $f(x)=-\log x: \mathbb R^+ \rightarrow \mathbb R$, we can see that $f^{''} (x)=\frac{1}{x^2}>0$ for all $x\in \mathbb R^+$. This by the second order condition of convexity, tells us that $f(x)=-\log x$ is convex. We then have \begin{center}$f(\theta x+(1-\theta)y)\leqslant \theta f(x)+(1-\theta)f(y)$\end{center} or equivalently \begin{center}$-\log(\theta x+(1-\theta)y)\leqslant -\theta \log(x)-(1-\theta)\log(y).$\end{center}
for $\theta\in [0,1]$. Since $\theta \log(x)+(1-\theta)\log(y)=\log (x^\theta y^{1-\theta})$, we have \begin{center}$\log(\theta x+(1-\theta)y)\geqslant \log (x^\theta y^{1-\theta}).$\end{center}Since the function $f(x)=\log (x)$ is monotone increasing, we then have \begin{center}$\theta x+(1-\theta)y\geqslant x^\theta y^{1-\theta}$.\end{center}
\end{proof}
We now prove the theorem. Assume that $\Pi_ix_i\geqslant 1$ and $\Pi_i y_i\geqslant 1$. Using the inequality proven above, we have \begin{center}
$\Pi_i(\theta x_i+(1-\theta)y_i)\geqslant \Pi x_i^\theta y_i^{(1-\theta)}=(\Pi_i x_i)^\theta(\Pi_i y_i)^{(1-\theta)}\geqslant 1 \text{for } \theta\in [0,1] .$\end{center}
\end{proof}Note that the "1" on the right-hand side of the equality can actually be any number larger than 1.
\begin{example}Given $a\in \mathbb R^n$, the function $||a-x||_2$ with domain $\mathbb R^n$ is convex.
\label{example: norm}
\end{example}
\begin{proof}
We then seek to prove that given $x,y\in \mathbb R^n$, for $\theta \in [0,1],$ we have \begin{center}$||a-(\theta x+(1-\theta)y)||_2\leqslant \theta ||a-x||_2+(1-\theta)||a-y||_2$.
\end{center}
Now \begin{align*}||a-(\theta x+(1-\theta)y)||_2 &= ||\theta a+(1-\theta)a-(\theta x+(1-\theta)y)||_2\\
&= ||\theta (a-x)+(1-\theta)(a-y)||_2\\
&\leqslant ||\theta(a-x)||_2+||(1-\theta)(a-y)||_2 \\
&= \theta ||a-x||_2+(1-\theta)||a-y||_2.
\end{align*}
Note in the last two steps we applied the triangle inequality of vector norms and the homogeneity of norm functions, repectively.
\end{proof}
\section{Convex Optimization}
In this section, we officially introduce our main focus, the convex optimization problems, and some features of the solution of the problem, including optimality criterions under different circumstances, such as with equality constraints alone, or unconstrained.
\subsection{Convex Optimization Problems}
\subsubsection{Standard form}
A convex optimization problem has the form \begin{align*}&\text{minimize} &f_0(x)\\
&\text{subject to} &f_i(x)\leqslant 0, i=1, \cdots, m\\
& & h_i(x)=0, i=1,\cdots, p,\end{align*} where $f_0, \cdots, f_m$ are convex functions and $h_i(x)$ are affine. We call $x\in \mathbb R^n$ the $\textit{optimization variable}$ and the function $f_0: \mathbb R^n\rightarrow \mathbb R$ the $\textit{objective function}$ or $\textit{object function}$. The inequalities $f_i(x)\leqslant 0$ are called $\textit{inequality constraints}$, and the corresponding functions $f_i: \mathbb R^n\rightarrow \mathbb R$ are called the $\textit{inequality constraint}$ $\textit{functions}$. The functions $h_i(x)$ are called $\textit{equality constraint functions}$ and are affine, i,e., has form $h_i(x)=a_i^T x-b_i$. The set of points for which the objective function and all constraint functions are defined is called the $\textit{domain}$ of the optimization problem. A point in the domain is defined as $\textit{feasible}$ if it satisfies the constraints $f_i(x)\leqslant 0, i=1,\cdots, m$ and $h_i(x)=0, i=1,\cdots, p$. The problem is said to be feasible if there exists at least one feasible point, and infeasible otherwise. The set of all feasible points is called the $\textit{feasible set}$ or the $\textit{constraint set}$.\\
The $\textit{optimal value p*}$ of the problem is defined as \begin{center}$\text{p*=inf}\{f_0(x)|f_i(x)\leqslant 0, i=1, \cdots, m, h_i(x)=0, i=1,\cdots, p\}$.\end{center}
\begin{prop*}The feasible set of a convex optimization problem is convex.
\end{prop*}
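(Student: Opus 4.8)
The plan is to exhibit the feasible set as an intersection of convex sets and then invoke Theorem~\ref{thm:convexintersect}. Writing the feasible set as
\[
F = \{x \mid f_i(x) \leqslant 0,\ i=1,\dotsc,m\} \cap \{x \mid h_j(x)=0,\ j=1,\dotsc,p\},
\]
I would decompose it further into the individual constraint sets $C_i = \{x \mid f_i(x) \leqslant 0\}$ and $H_j = \{x \mid h_j(x)=0\}$, so that $F = \bigcap_i C_i \cap \bigcap_j H_j$. Since Theorem~\ref{thm:convexintersect} guarantees that an arbitrary intersection of convex sets is convex, it suffices to show that each $C_i$ and each $H_j$ is convex.

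The crux of the argument is a sublevel-set claim: if $g$ is a convex function then $\{x \mid g(x) \leqslant 0\}$ is convex. I would prove this directly from the definition of a convex function. Taking $x,y$ with $g(x) \leqslant 0$ and $g(y) \leqslant 0$ and $\theta \in [0,1]$, convexity of $\textbf{dom}\,g$ keeps $\theta x + (1-\theta)y$ in the domain, and then
\[
g(\theta x + (1-\theta)y) \leqslant \theta g(x) + (1-\theta)g(y) \leqslant 0,
\]
so $\theta x + (1-\theta)y$ again lies in the sublevel set. Applying this with $g = f_i$, which is convex by hypothesis, shows every $C_i$ is convex.

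For the equality constraints I would use that each $h_j$ is affine, so $h_j(x) = a_j^T x - b_j$ and the set $H_j = \{x \mid h_j(x) = 0\} = \{x \mid a_j^T x = b_j\}$ is exactly a hyperplane. A hyperplane was shown to be an affine set, and every affine set is convex, since the affine condition is precisely the convexity condition with $\theta$ ranging over all of $\mathbb{R}$ rather than only $[0,1]$. Hence each $H_j$ is convex. With every $C_i$ and $H_j$ convex, Theorem~\ref{thm:convexintersect} finishes the proof.

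I do not expect any serious obstacle here; the only point requiring care is the sublevel-set lemma, and specifically remembering that the convexity of $\textbf{dom}\,f_i$ is what guarantees the convex combination stays in the domain where the inequality is even defined. Everything else is a direct appeal to results already established.
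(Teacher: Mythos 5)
Your proposal is correct and takes essentially the same route as the paper: express the feasible set as an intersection of the inequality-constraint sublevel sets and the equality-constraint hyperplanes, then invoke Theorem~\ref{thm:convexintersect}. You actually supply more detail than the paper (which asserts the sublevel sets are convex without proof, while you prove that claim); the only piece the paper includes that you omit is intersecting with the problem domain $\bigcap_{i=0}^{m}\textbf{dom}f_i$ --- in particular $\textbf{dom}f_0$ --- which is convex by the same reasoning, so nothing essential changes.
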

\begin{proof}Since the feasible set is the set of all feasible points, it is the intersection of the domain of the problem $D=\bigcap_{i=0}^m \textbf{dom}f_i$, which is convex, with m convex sublevel sets $\{x\|f_i(x)\leqslant 0\}$ and p hyperplanes $\{x\|a_i^Tx=b_i\}$ by theorem \ref{thm:convexintersect}. Thus the feasible set must be convex.
\end{proof}
Thus, in convex optimization, we minimize a convex objective function over a convex set.
\begin{definition*}x* is an optimal point if x* is feasible and $f_0(x^*)=p^*$.  A feasible point $x$ is locally optimal if there is an $R>0$ such that \begin{center}$f_0(x)=\text{inf}\{f_0(z)|f_i(z)\leqslant0, i=1,\cdots, m, h_i(z)=0, i=1,\cdots ,p, ||z-x||_2\leqslant R\}$\end{center}
\end{definition*}
\begin{theorem}For convex optimization problems, any locally optimal point is also a global optimal.
\end{theorem}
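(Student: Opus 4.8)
The plan is to argue by contradiction, exploiting the two convexity hypotheses built into a convex optimization problem: the feasible set is convex (as established in the preceding proposition) and the objective $f_0$ is convex. Suppose $x$ is locally optimal but not globally optimal. Then there is a feasible point $y$ with $f_0(y) < f_0(x)$, while local optimality supplies a radius $R > 0$ such that $x$ minimizes $f_0$ over all feasible $z$ satisfying $\|z - x\|_2 \leqslant R$.

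The key idea is to walk from $x$ toward the better point $y$ along the segment joining them and land inside the local-optimality ball. First I would set $z_\theta = \theta y + (1-\theta) x$ for $\theta \in [0,1]$. By convexity of the feasible set, every $z_\theta$ is feasible. By convexity of $f_0$, we get $f_0(z_\theta) \leqslant \theta f_0(y) + (1-\theta) f_0(x) < f_0(x)$ for every $\theta \in (0,1]$, where the strict inequality uses $f_0(y) < f_0(x)$.

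It then remains to force some such $z_\theta$ to lie within distance $R$ of $x$. Since $f_0(y) < f_0(x)$ forces $y \neq x$, we have $\|y - x\|_2 > 0$, and a direct computation gives $\|z_\theta - x\|_2 = \theta \|y - x\|_2$. Hence I would select any $\theta \in (0,1]$ with $\theta \leqslant R / \|y - x\|_2$. For this $\theta$, the point $z_\theta$ is feasible, satisfies $\|z_\theta - x\|_2 \leqslant R$, and obeys $f_0(z_\theta) < f_0(x)$, contradicting the defining property of local optimality and completing the argument.

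The step I expect to require the most care is the selection of $\theta$: it must be simultaneously small enough to keep $z_\theta$ inside the ball of radius $R$ and strictly positive so that the strict inequality $f_0(z_\theta) < f_0(x)$ survives. This is precisely where both hypotheses are used in tandem, since convexity of the feasible set keeps $z_\theta$ admissible while convexity of $f_0$ forces the objective strictly below $f_0(x)$ the instant we move any positive amount toward $y$. (When $f_0$ is differentiable one could alternatively invoke the first-order condition of Theorem \ref{thm:firstorder}, but the contradiction argument above needs no differentiability assumption and is the cleaner route.)
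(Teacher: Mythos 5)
Your proof is correct and takes essentially the same route as the paper's: a contradiction argument that walks from $x$ toward the better feasible point $y$ along the segment $\theta y + (1-\theta)x$, using convexity of the feasible set to stay feasible, convexity of $f_0$ to get the strict inequality $f_0(z_\theta) < f_0(x)$, and a small enough $\theta$ to land inside the local-optimality ball. The only cosmetic difference is your choice of any $\theta \in (0,1]$ with $\theta \leqslant R/\|y-x\|_2$, whereas the paper first notes $\|y-x\|_2 > R$ and then fixes $\theta = R/(2\|y-x\|_2)$.
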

The ideas are based on \cite{Boyd}(p.138) with details being filled and things rearranged.
\begin{proof}Suppose that $x$ is locally optimal for a convex optimization problem. Then by definition, there exists $R>0$ such that \begin{center}$f_0(x)=\text{inf}\{f_0(z)|z \text{ feasible}, ||z-x||_2\leqslant R\}.$\end{center}
Assume $x$ is not a global optimal, then there must exist $y$ that is feasible and satisfies $f_0(y)<f_0(x)$. Since $x$ is a local optimal, we must have $||y-x||_2>R$. By convexity of the feasible set, we have \begin{center}$(1-\theta) x+\theta y$\end{center} also in the feasible set for $\theta=\frac{R}{2||y-x||_2}\in [0,1]$. Then, by convexity of $f_0$ we have \begin{align}f_0((1-\theta) x+\theta y)&\leqslant (1-\theta) f_0(x)+\theta f_0(y)\nonumber \\
&<(1-\theta) f_0(x)+\theta f_0(x)\nonumber \\
&=f_0(x)\end{align}
However, note that 
\begin{center}$||(1-\theta)x+\theta y-x||_2=||\theta(y-x)||_2=\frac{R}{2||y-x||_2} ||y-x||_2=\frac{R}{2}<R$
\end{center}which means that $(1-\theta)x+\theta y$ lies in the neighborhood of $x$ in the local sense and thus we must have $f_0(x)\leqslant f_0((1-\theta) x+\theta y)$ by definition of local optimal. We then derive contradiction to (5).
\end{proof}
\begin{theorem}For a convex optimization problem where $f_0$ is strictly convex on the convex feasible set $D$, the optimal solution, if it exists, must be unique.
\end{theorem}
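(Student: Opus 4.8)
The plan is to argue by contradiction, exploiting the defining inequality of strict convexity together with the convexity of the feasible set. Suppose, for the sake of contradiction, that the problem admits two distinct optimal points $x_1, x_2 \in D$ with $x_1 \neq x_2$. By definition of optimality, both are feasible and satisfy $f_0(x_1) = f_0(x_2) = p^*$, where $p^*$ is the optimal value.

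Next I would form the midpoint $z = \tfrac{1}{2}x_1 + \tfrac{1}{2}x_2$, which corresponds to the choice $\theta = \tfrac{1}{2}$. Since $D$ is convex and $x_1, x_2 \in D$, the point $z$ is again feasible. This is the step where I invoke convexity of the feasible set (already established for convex optimization problems) so that $z$ is a legitimate competitor for optimality.

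Now I would apply strict convexity of $f_0$. Because $x_1 \neq x_2$ and $\theta = \tfrac{1}{2}$ lies strictly inside $(0,1)$, the hypotheses for the strict inequality are met, giving
\begin{align*}
f_0(z) = f_0\!\left(\tfrac{1}{2}x_1 + \tfrac{1}{2}x_2\right) &< \tfrac{1}{2}f_0(x_1) + \tfrac{1}{2}f_0(x_2) \\
&= \tfrac{1}{2}p^* + \tfrac{1}{2}p^* = p^*.
\end{align*}
Thus $z$ is a feasible point with $f_0(z) < p^*$, which contradicts the fact that $p^*$ is the optimal value (the infimum of $f_0$ over the feasible set). Hence no two distinct optimal points can exist, and the optimal solution, when it exists, is unique.

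There is no serious obstacle in this argument; the only point requiring care is the correct invocation of strict convexity, namely checking that its two side conditions ($x_1 \neq x_2$ and $0 < \theta < 1$) genuinely hold so that the inequality is strict rather than merely weak. If those conditions failed, the midpoint argument would only recover $f_0(z) \leqslant p^*$, which would not produce a contradiction. It is precisely the strictness of the inequality that upgrades the existence result into a uniqueness result.
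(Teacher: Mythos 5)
Your proposal is correct and follows essentially the same argument as the paper's proof: assume two distinct optimal points, take the midpoint (which is feasible by convexity of $D$), and apply strict convexity with $\theta = \tfrac{1}{2}$ to obtain a feasible point with strictly smaller objective value, a contradiction. If anything, your version is slightly more careful than the paper's, since you explicitly verify the side conditions $x_1 \neq x_2$ and $0 < \theta < 1$ needed for the strict inequality.
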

My own proof:
\begin{proof}Suppose there are two optimal solutions $x,y$. This means that $x,y\in D$ and \begin{center}$f_0(x)=f_0(y)\leqslant f(z), \forall z \in D$\end{center}
By convexity of $D$, we know that $\frac{1}{2} x+\frac{1}{2} y\in D$. By strict convexity of $f_0$, we have \begin{align*}f_0(\frac{1}{2} x+\frac{1}{2} y)&<\frac{1}{2}f_0(x)+\frac{1}{2}f_0(y)\\
&=\frac{1}{2}f_0(x)+\frac{1}{2}f_0(x)\\
&=f_0(x)\end{align*}
Thus we have $f_0(\frac{1}{2} x+\frac{1}{2} y)<f_0(x)$, which contradicts the definition of optimal solutions.
\end{proof}
\subsubsection{An optimality criterion for differentiable $f_0$}
\begin{theorem}Suppose that the objective $f_0$ in a convex optimization problem is differentiable. Let $D$ denote the feasible set. Then $x$ is optimal if and only if $x\in D$ and \begin{center}$\nabla f_0(x)^T(y-x)\geqslant 0, \forall y\in D$. 
\end{center}\label{thm:optimality}
\end{theorem}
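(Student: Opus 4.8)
The plan is to prove both directions, leaning on the first-order condition for convexity (Theorem \ref{thm:firstorder}) for the straightforward direction and on a directional-derivative argument for the harder one.

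For the ($\Leftarrow$) direction, I would assume $x\in D$ satisfies $\nabla f_0(x)^T(y-x)\geqslant 0$ for all $y\in D$, and directly invoke the first-order condition. Since $f_0$ is convex and differentiable, Theorem \ref{thm:firstorder} gives $f_0(y)\geqslant f_0(x)+\nabla f_0(x)^T(y-x)$ for all $y$ in the domain, and in particular for all feasible $y\in D$. The hypothesis forces the inner-product term to be nonnegative, so $f_0(y)\geqslant f_0(x)$ for every $y\in D$, which is precisely the statement that $x$ is optimal.

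For the ($\Rightarrow$) direction, I would argue by contradiction. Suppose $x$ is optimal but there exists a feasible $y$ with $\nabla f_0(x)^T(y-x)<0$. I would then move from $x$ toward $y$ along the segment $z(\theta)=x+\theta(y-x)$, which remains in $D$ for $\theta\in[0,1]$ by convexity of the feasible set (already established). Defining $g(\theta)=f_0(z(\theta))$, the chain rule gives $g'(0)=\nabla f_0(x)^T(y-x)<0$, so $g$ is strictly decreasing at $\theta=0$; consequently for sufficiently small $\theta>0$ we obtain a feasible point $z(\theta)$ with $f_0(z(\theta))<f_0(x)$, contradicting the optimality of $x$.

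The main obstacle is making this last step fully rigorous: I must argue carefully from the definition of the derivative that $g'(0)<0$ genuinely produces a nearby feasible point of strictly smaller objective value, rather than merely a decreasing limiting slope. This is where I expect to spend the most care, since the remainder of the argument reduces cleanly to the first-order condition and to the convexity of the feasible set proven earlier.
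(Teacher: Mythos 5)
Your proposal is correct and follows essentially the same route as the paper's proof: the ($\Leftarrow$) direction by direct appeal to the first-order condition (Theorem \ref{thm:firstorder}), and the ($\Rightarrow$) direction by contradiction, restricting $f_0$ to the segment $z(t)=x+t(y-x)$ (feasible by convexity of $D$) and using the negative directional derivative at $t=0$ to produce a feasible point with strictly smaller objective value. The final step you flag as needing care follows directly from the definition of the derivative as a one-sided limit of difference quotients, which is exactly the level of rigor the paper's own argument employs.
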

My own proof:
\begin{proof}($\Leftarrow$) Suppose $x\in D$ satisfies \begin{center}$\nabla f_0(x)^T(y-x)\geqslant 0, \forall y\in D$. \end{center}
By the first order condition of convexity, we have\begin{center}$f_0(y)\geqslant f_0(x)+\nabla f_0(x)^T(y-x), \forall y\in D$.\end{center}
Thus \begin{center}$f_0(y)-f_0(x)\geqslant \nabla f_0(x)^T(y-x)\geqslant 0, \forall y\in D.$
\end{center}$x$ is then by definition an optimal solution.\\
($\Rightarrow$)Suppose $x$ is optimal but for some $y\in D$ we have \begin{center}$ \nabla f_0(x)^T(y-x)<0.$\end{center}
Now consider $z(t)=ty+(1-t)x=x+t(y-x)$, where $t\in [0,1]$. By definition of convexity of the feasible set, $z(t)\in D$. Now Define $h(t)=f_0(z(t))=f_0(x+t(y-x))$. We then have
\begin{align*}
h'(t)&=(y-x)^T \nabla f_0(x+t(y-x))\\
\rightarrow h'(0)&=(y-x)^T \nabla f_0(x)<0.
\end{align*}
This tells us that when we move away from $0$ by a small positive distance $t$, we would get $h(t)<h(0)$. This is equivalent to \begin{center}
$f_0(z(t))<f_0(x)$ for a small positive $t$. 
\end{center}
This contradicts with $x$ being optimal, and thus we have $\nabla f_0(x)^T(y-x)\geqslant 0, \forall y\in D$.
\end{proof}
Here are two special cases derived from this theorem, which will be used later.
\begin{theorem}For an unconstrained problem, the condition reduces to \begin{center}$\nabla f_0(x)=0$\end{center} for $x$ to be optimal.
\label{thm:unconstrained}
\end{theorem}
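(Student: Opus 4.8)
The plan is to derive this as an immediate corollary of Theorem \ref{thm:optimality}. For an unconstrained problem there are no inequality or equality constraints, so the feasible set $D$ is the entire domain of $f_0$, which I take to be $\mathbb R^n$. The optimality criterion then reads: $x$ is optimal if and only if $\nabla f_0(x)^T(y-x) \geqslant 0$ for all $y \in \mathbb R^n$. My task is therefore to show that, over this unrestricted feasible set, that condition is equivalent to the single equation $\nabla f_0(x) = 0$.

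The easy direction is quick: if $\nabla f_0(x) = 0$, then $\nabla f_0(x)^T(y-x) = 0 \geqslant 0$ for every $y$, so the hypothesis of Theorem \ref{thm:optimality} holds and $x$ is optimal.

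For the converse I would exploit the fact that $y$ is free to range over all of $\mathbb R^n$. Starting from $\nabla f_0(x)^T(y-x) \geqslant 0$ for all $y$, I would make the specific choice $y = x - \nabla f_0(x)$, which is admissible precisely because the problem is unconstrained and so this point is still feasible. This gives $y - x = -\nabla f_0(x)$ and hence $-\|\nabla f_0(x)\|_2^2 \geqslant 0$. Since a squared norm is nonnegative, this forces $\|\nabla f_0(x)\|_2^2 = 0$, and therefore $\nabla f_0(x) = 0$.

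There is essentially no deep obstacle here; the only point requiring care is the identification that \emph{unconstrained} means the feasible set is all of $\mathbb R^n$, which is exactly what licenses the substitution $y = x - \nabla f_0(x)$. On a constrained problem the argument would break down precisely because that chosen $y$ might fall outside the feasible set, so I would emphasize that this reduction is special to the unconstrained setting.
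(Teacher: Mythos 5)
Your proposal is correct and takes essentially the same route as the paper: both derive the result as a corollary of Theorem \ref{thm:optimality}, note that in the unconstrained case every point is feasible, and then substitute a point in the negative gradient direction to force $-\|\nabla f_0(x)\|_2^2 \geqslant 0$, hence $\nabla f_0(x)=0$. The only cosmetic difference is that the paper writes the test point as $y = x - t\nabla f_0(x)$ with a parameter $t>0$ while you fix $t=1$, which changes nothing in the argument.
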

\begin{proof}
($\Rightarrow$) If $\nabla f_0(x)=0$, then $\nabla f_0(x)^T(y-x)= 0, \forall y\in D$.\\
($\Leftarrow$) Suppose $x$ is optimal, for all feasible $y$ we have by previous theorem $\nabla f_0(x)^T(y-x)\geqslant 0$. Since there is no constraint and $f_0$ differentiable, all $y$ are feasible. Let $y=x-t \nabla f_0(x)$, where $0<t\in \mathbb R$ is a parameter. Thus we have \begin{center}$\nabla f_0(x)^T(y-x)=-t||\nabla f_0(x)||^2_2\leqslant 0.$\end{center}
We then must have $\nabla f_0(x)=0$.
\end{proof}
\begin{theorem}For a convex optimization problem that only has equality constraints, i.e., min$f_0(x)$ subject to $Ax=b$, where $A\in \mathbb R^{m\times n}$, $x^*$ is optimal if and only if there exists $k\in \mathbb R^m$ such that \begin{center}$\nabla f_0(x^*)+A^T k=0,$ where $Ax^*=b$ (i.e., $x^*$ feasible)\label{eqaulity constraint}\end{center}
\end{theorem}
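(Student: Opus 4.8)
The plan is to reduce everything to the differentiable optimality criterion already established in Theorem \ref{thm:optimality}, which states that a feasible $x^*$ is optimal if and only if $\nabla f_0(x^*)^T(y - x^*) \geq 0$ for every feasible $y$. The only real work left is to translate this variational inequality, which ranges over the affine feasible set $D = \{x : Ax = b\}$, into the purely algebraic condition $\nabla f_0(x^*) + A^T k = 0$.

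First I would identify the feasible directions. Since $x^*$ and any feasible $y$ both satisfy $Ax = b$, we get $A(y - x^*) = 0$; conversely, for any $v$ in the null space $\mathcal{N}(A) = \{v : Av = 0\}$ the point $x^* + v$ is again feasible. Hence as $y$ ranges over $D$, the difference $y - x^*$ ranges over exactly the subspace $\mathcal{N}(A)$, and the optimality condition becomes $\nabla f_0(x^*)^T v \geq 0$ for all $v \in \mathcal{N}(A)$.

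Next I would exploit that $\mathcal{N}(A)$ is a subspace, not merely a convex set: it is closed under negation, so applying the inequality to both $v$ and $-v$ forces $\nabla f_0(x^*)^T v = 0$ for all $v \in \mathcal{N}(A)$. In other words $\nabla f_0(x^*) \in \mathcal{N}(A)^\perp$. The main structural input is then the fundamental theorem of linear algebra, $\mathcal{N}(A)^\perp = \mathrm{Range}(A^T)$, which lets me write $\nabla f_0(x^*) = A^T w$ for some $w \in \mathbb{R}^m$; setting $k = -w$ yields $\nabla f_0(x^*) + A^T k = 0$. I expect this orthogonal-complement identity to be the one genuinely nontrivial ingredient, while the passage from $D$ to $\mathcal{N}(A)$ and back is routine bookkeeping.

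The converse is a short direct computation that I would record for completeness: if $\nabla f_0(x^*) = -A^T k$ and $Ax^* = b$, then for any feasible $y$ we have $\nabla f_0(x^*)^T(y - x^*) = -k^T A(y - x^*) = -k^T(b - b) = 0 \geq 0$, so Theorem \ref{thm:optimality} immediately gives optimality of $x^*$. Thus both directions follow once the linear-algebraic characterization of $\mathcal{N}(A)^\perp$ is in hand.
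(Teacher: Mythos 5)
Your proposal is correct, and it coincides with the argument the paper itself relies on: the paper gives no proof of its own for this theorem, deferring instead to \cite{Boyd} p.~141, and the proof there is precisely your route — invoke Theorem \ref{thm:optimality}, observe that the feasible differences $y-x^*$ sweep out $\mathcal{N}(A)$, use closure of the subspace under negation to upgrade $\nabla f_0(x^*)^T v \geqslant 0$ to equality, and conclude via $\mathcal{N}(A)^\perp = \mathrm{Range}(A^T)$, with the converse being the same one-line computation you give. Nothing essential is missing; at most you could remark that when $\textbf{dom}f_0 \neq \mathbb R^n$ one restricts to small multiples $tv$ of each null-space direction (possible since the domain is open), which does not change the argument.
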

Proof of this theorem is given on \cite{Boyd} p.141.
We now look at two examples. The first example contains the application of theorem 11 in three dimensional space. The second example lies in two dimensional space and involve the Chebyshev Center. It includes both rewriting the problem into a convex optimization problem and it proposes a circumstance in which Theorem \ref{thm:optimality} can't be used as a practical algorithm.\\
\begin{example}
\begin{align*}\text{minimize } & x^2+y^2+z^2-xy-yz\\
\text{subject to } &\text{the set of points that are closer, in Euclidean norm, to (2,3,4) than to} \\
&\text{the set } \{x,y,z\in \mathbb R|z\leqslant 0\} \text{and must obey };\\
& xyz\geqslant 1 \text{and };\\
& x,y,z\geqslant 0;
\end{align*}
\label{exa:object}
\end{example}
We first aim to show that the problem is indeed a convex optimization problem.
\begin{proof}The Hessian matrix of the object function is 
\[
\begin{bmatrix}
2 &-1 &0\\
-1 & 2 &-1\\
0 & -1 &2
\end{bmatrix}
\]
We know that the Hessian matrix is positive semidefinite (by computing eigenvalues, which are all nonnegative), thus the object function must be convex by theorem \ref{thm:secondorder}, the second order condition of convex functions,.\\

The first constraint, by what we have proven before in example \ref{example:1}, is convex.
Since constraint two is only a special case of Example \ref{example:product}, it certainly has to be convex. The last constraint is also convex since it is the intersection of three halfspaces, $x\geqslant 0$, $y\geqslant 0$, $z\geqslant 0$ and halfspace is convex. We then proved that this problem is a convex optimization problem.
\end{proof}
This problem, however, can be expanded in many different ways based on the two propositions that we have proven above. For example, the set in the first constraint can be much more complicated. For example, we can set the constraint to be the set of points that are closer to a series of points, which would still be a polyhedron according what we have proven. We can also introduce more variables and use Hessian matrix to test the convexity of the object function, or introduce more relationships similar to our second constraint, etc. Thus we can call this simple example as a prototype, and we will continue use it for the sake of simplicity of calculations.

\begin{example}Suppose that we are given $k$ points $a_1,a_2,\cdots, a_k \in \mathbb R^n$. The objective is to find the center of the minimum radius closed ball containing all the points. The ball is called the Chebyshev ball and the corresponding center is the Chebyshev center. The problem can be written as \begin{align*}&\text{minimize } &r\\
&\text{such that there exists } x \text{such that } &a_i\in B[x,r], i=1,2,\cdots, k,
\end{align*}where $B[x,r]$ denotes the closed ball with radius $r$ and tbe center $x$.\label{example:2}
\end{example}
Now since the ball $B[x,r]$ can be rewritten as $\{y: ||y-x||_2 \leqslant r\}$, we can reformulate the problem to \begin{align*}\text{minimize } &f_0(x,r)=r\\
\text{subject to } & ||a_i-x||_2\leqslant r, i=1,2,\cdots, k,
\end{align*}where $f_0:\mathbb R^n\times \mathbb R\rightarrow \mathbb R$. We prove that it's a convex optimization problem.
\begin{proof}The object function is obviously convex since it is apparently affine and a function is affine if and only if it is both convex and concave (proven before.)\\
We now seek to prove $||a_i-x||_2-r, i=1,2,\cdots, k$ are convex functions. Since $||a_i-x||_2$ is convex by Example \ref{example: norm}, $r$ is affine, it follows that $||a_i-x||_2-r, i=1,2,\cdots, k$ are convex functions since addition preserve convexity.
\end{proof}
We can see that our example is indeed a convex optimization problem but Theorem \ref{thm:optimality} is not a practical algorithm for finding a solution since we cannot simply choose a point and compare it with every other points in the feasible set. The work is burdensome. Therefore, we seek a way to overcome this problem and we introduce the interior-point method as our algorithm.
\section{Duality}
In this section, we briefly introduce the Lagrangian duality by defining the Lagrangian $L$, the Lagrangian multiplier and the dual function, in order to prepare for the interior-point method in the later section.\\
We consider the problem in the standard form: \begin{align*}\text{minimize} &f_0(x)\\
\text{subject to} &f_i(x)\leqslant 0, i=1,\cdots, m\\
&h_i(x)=0, i=1,\cdots, p,
\end{align*}with variable $x\in \mathbb R^n$. We assume its domain is nonempty and denote the optimal value as $p^*$. This problem is also referred as the $\textit{primal problem}$. Note the optimization problem here does not need to be convex.\\

The basic idea in duality is to take the constraints in the problem into account by incorporating into the objective function a weighted sum of the constraint functions.
\begin{definition}Define the Lagrangian $L: \mathbb R^n \times \mathbb R^m \times \mathbb R^p\rightarrow \mathbb R$ associated with the problem above as \begin{center}$L(x,\lambda,v)=f_0(x)+\sum_{i=1}^m \lambda_i f_i(x)+\sum_{i=1}^p v_i h_i(x),$\end{center}with domain $D \times \mathbb R^m \times \mathbb R^p$, with $D$ being the domain of $x$ where the object function and all constraint functions are defined. 
\end{definition}We refer to $\lambda_i$ as the $\textit{Lagrange multiplier}$ associated with the $i$th inequality constraint $f_i(x)\leqslant 0$, and $v_i$ as the Lagrange multiplier associated with the $i$th equality constraint $h_i(x)=0$. $\lambda$ and $v$ are called the $\textit{dual variables}$.
\begin{definition}The dual function $g:\mathbb R^m \times \mathbb R^p \rightarrow \mathbb R$ is the minimum value of $L(x,\lambda,v) \text{over }x\in D: \text{for }\lambda \in \mathbb R^m, v\in \mathbb R^p$, our dual variable,  \begin{center}$g(\lambda,v)=\text{inf}_{x\in D} L(x,\lambda,v)$.
\end{center}\end{definition}
Customarily, we define $d^*$ as the optimal value (the maximum value) of our dual function $g(\lambda , v)$.
\begin{theorem}The dual function $g$ yields lower bounds for the optimal value of the primal problem: \begin{center}$g(\lambda, v)\leqslant p^*.$\end{center}for any $\lambda \geqslant 0$ and any $v$.
\label{thm:dual}
\end{theorem}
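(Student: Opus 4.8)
The plan is to prove weak duality directly from the definitions, exploiting the sign conditions on the dual variables and the feasibility conditions on the primal variable. The core observation is that for any feasible point, the extra terms added to the objective in the Lagrangian are nonpositive, so the Lagrangian underestimates the objective; minimizing it over the whole domain can only decrease its value further.

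First I would fix an arbitrary dual pair $(\lambda, v)$ with $\lambda \geqslant 0$ (componentwise) and let $\tilde x$ be any feasible point for the primal problem, so that $f_i(\tilde x) \leqslant 0$ for $i = 1, \dots, m$ and $h_i(\tilde x) = 0$ for $i = 1, \dots, p$. The key pointwise estimate is that
\begin{align*}
L(\tilde x, \lambda, v) &= f_0(\tilde x) + \sum_{i=1}^m \lambda_i f_i(\tilde x) + \sum_{i=1}^p v_i h_i(\tilde x) \leqslant f_0(\tilde x),
\end{align*}
since each $\lambda_i f_i(\tilde x) \leqslant 0$ (product of a nonnegative $\lambda_i$ with a nonpositive $f_i(\tilde x)$) and each $v_i h_i(\tilde x) = 0$ (because $h_i(\tilde x) = 0$). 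This is the heart of the argument.

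Next I would chain this with the definition of the dual function as an infimum over the whole domain $D$. Since $\tilde x \in D$ (every feasible point lies in the domain), the infimum defining $g$ is taken over a set containing $\tilde x$, so
\begin{center}
$g(\lambda, v) = \inf_{x \in D} L(x, \lambda, v) \leqslant L(\tilde x, \lambda, v) \leqslant f_0(\tilde x)$.
\end{center}
Thus $g(\lambda, v)$ is a lower bound for $f_0(\tilde x)$ at every feasible $\tilde x$. Finally, taking the infimum over all feasible $\tilde x$ on the right-hand side yields $g(\lambda, v) \leqslant \inf\{f_0(\tilde x) : \tilde x \text{ feasible}\} = p^*$, which is exactly the claim.

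The argument is short and carries no serious analytic obstacle; the only subtlety to be careful about is the logic of passing to the infimum on the right at the end — one must argue that since $g(\lambda, v) \leqslant f_0(\tilde x)$ holds for every feasible $\tilde x$, the constant $g(\lambda, v)$ is a lower bound for the feasible values of $f_0$ and hence does not exceed their greatest lower bound $p^*$. I would also note the degenerate case where the primal is infeasible, in which $p^* = +\infty$ by the usual convention and the inequality holds vacuously, so the statement is really only meaningful when a feasible point exists.
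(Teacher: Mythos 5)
Your proposal is correct and follows essentially the same argument as the paper's own proof: fix a feasible point, observe that the Lagrangian terms $\sum_i \lambda_i f_i + \sum_i v_i h_i$ are nonpositive there, chain the infimum defining $g$ through that point, and then take the infimum over feasible points. If anything, your version is slightly more careful than the paper's, which conflates ``feasible'' with ``$\hat{x}\in D$'' in its final step, whereas you correctly distinguish the feasible set from the domain and note the vacuous infeasible case.
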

The duality gap is known as the difference between $d^*$ and $p^*$.
\begin{proof}Suppose $\hat{x}$ is a feasible point for the primal problem, i,e., $\hat{x} \in D$. We then have \begin{center}$f_i(\hat{x})\leqslant 0$ and $h_i(\hat{x})=0$, and $\lambda \geqslant 0$.
\end{center}Thus \begin{center}$\sum_{i=1}^m \lambda_i f_i(\hat{x})+\sum_{i=1}^p v_i h_i(\hat{x})\leqslant 0.$\end{center}Therefore \begin{center}$L(\hat{x},\lambda,v)=f_0(\hat{x})+\sum_{i=1}^m \lambda_i f_i(\hat{x})+\sum_{i=1}^p v_i h_i(\hat{x})\leqslant f_0(\hat{x})$.\end{center}Hence
\begin{center}$g(\lambda,v)=\text{inf}_{x\in D} L(x,\lambda, v)\leqslant L(\hat{x},\lambda, v)\leqslant f_0(\hat{x})$.
\end{center}Since $g(\lambda,v)\leqslant f_0(\hat{x})$ holds for all $\hat{x}\in D$, we have $g(\lambda,v)\leqslant p^*$.
\end{proof}
This theorem will be important later in the interior-point method.
\section{Algorithms for Optimization Problems}
In this chapter we first discuss methods for solving the unconstrained convex optimization problem \begin{center}minimize $f(x)$\end{center} where $f:\mathbb R^n\rightarrow \mathbb R$ is convex and twice continuously differentiable. We will assume that the problem is solvable (there exists an optimal point $x^*$). Let the optimal value be $f(x^*)=p^*$.\\
 In the following sections we solve the problem using an iterative algorithm, an algorithm that computes a sequence of points $x^{(0)}, x^{(1)},\cdots \in \mathbb R^n$ that aims for $f(x^{(k)})\rightarrow p^*$ as $k\rightarrow \infty$. Such a sequence is called the minimizing sequence. The algorithm is terminated when $f(x^{(k+1)})-f(x^{(k)})\leqslant \epsilon$ and $f(x^{(k+2)})-f(x^{(k+1)})\leqslant \epsilon$. We want the two differences less than $\epsilon$ in a row since we want to avoid the situation when two points are accidentally close, whereas still not close to the optimal point.\\

I will first introduce the Descent method in section 6.1, which requires to find a descent direction and choose a step size for each iteration. Then in section 6.2, I will introduce the Newton's step (even it's commonly called``step," a more accurate word should be ``direction"), a specific descent search direction. Afterwards, I will present Newton's method with equality constraints that serves as the foundation of the interior-point method.
\subsection{Descent Methods}This section forms the foundation of the following two algorithms. A descent method produces a sequence $x^{(k)}$ in $\mathbb R^n, k=1,\cdots,$ where \begin{center}$x^{(k+1)}=x^{(k)}+t^{(k)}\Delta x^{(k)}$,
\end{center}where $t^{(k)}>0$ (except when $x^{(k)}$ is optimal). Here $\Delta x^{(k)}\in \mathbb R^n$ is called the $\textit{search direction}$, where $k=1,\cdots$ denotes the iteration number. Scalar $t^{(k)}$ is called the $\textit{step length}$. \\
All the methods we study are $\textit{descent methods}$, which means \begin{center}$f(x^{(k+1)})<f(x^{(k)})$.
\end{center}
\begin{prop}If $f$ is convex, then at each search step, if we want the algorithm to be descent method, then the search direction must satisfy \begin{center}$\nabla f(x^{(k)})^T \Delta x^{(k)}<0$.
\end{center}\label{prop:search}\end{prop}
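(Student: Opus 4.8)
The plan is to read the descent condition $f(x^{(k+1)}) < f(x^{(k)})$ through the first-order characterization of convexity, Theorem \ref{thm:firstorder}, which is precisely the tool that converts a statement about function values into a statement about the gradient. The entire argument is one application of that inequality followed by dividing out the positive step length.

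First I would instantiate the first-order condition at the two consecutive iterates. Taking $x = x^{(k)}$ and $y = x^{(k+1)} = x^{(k)} + t^{(k)}\Delta x^{(k)}$, so that $y - x = t^{(k)}\Delta x^{(k)}$, Theorem \ref{thm:firstorder} gives
\begin{align*}
f(x^{(k+1)}) &\geqslant f(x^{(k)}) + \nabla f(x^{(k)})^T\big(t^{(k)}\Delta x^{(k)}\big)\\
&= f(x^{(k)}) + t^{(k)}\,\nabla f(x^{(k)})^T\Delta x^{(k)}.
\end{align*}
Next I would bring in the descent hypothesis $f(x^{(k+1)}) < f(x^{(k)})$. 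Chaining it with the displayed inequality gives $f(x^{(k)}) > f(x^{(k)}) + t^{(k)}\,\nabla f(x^{(k)})^T\Delta x^{(k)}$, hence $t^{(k)}\,\nabla f(x^{(k)})^T\Delta x^{(k)} < 0$. Since $t^{(k)} > 0$ by the definition of the step length, dividing by $t^{(k)}$ yields $\nabla f(x^{(k)})^T\Delta x^{(k)} < 0$, which is the claim.

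The algebra is routine, so the point I would be careful to stress is that convexity is genuinely load-bearing here rather than cosmetic. For a general differentiable $f$ the implication can fail: a direction with $\nabla f(x)^T\Delta x \geqslant 0$ may still decrease $f$ over a finite step if the graph rises at first and then dips. What forbids this is exactly that a convex function lies above each of its tangent hyperplanes, which is the content of Theorem \ref{thm:firstorder}, and this is what forces the directional slope at $x^{(k)}$ to have been negative whenever the value genuinely dropped. I would therefore treat the invocation of the first-order condition as the conceptual crux, with no real computational obstacle beyond it.
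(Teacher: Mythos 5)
Your proof is correct and uses the same essential argument as the paper: both hinge on applying the first-order condition (Theorem \ref{thm:firstorder}) to the consecutive iterates; yours is the direct contrapositive form of the paper's proof by contradiction. If anything, your version is slightly tighter, since you explicitly write $y - x^{(k)} = t^{(k)}\Delta x^{(k)}$ and divide out $t^{(k)} > 0$, a step the paper leaves implicit.
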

\begin{proof}By theorem \ref{thm:firstorder} the first-order condition, since $f$ is convex, we know \begin{center}$f(y)\geqslant f(x)+\nabla f(x)^T(y-x)$.
\end{center}Now suppose \begin{center}$\nabla f(x^{(k)})^T (y-x^{(k)})\geqslant 0$. \end{center}We then have \begin{center}$f(y)\geqslant f(x^{(k)}).$\end{center}
If we let $f(y)=f(x^{(k+1)})$ be our new choice in the sequence of iteration, then the method is not descent anymore, which contradicts with our assumpustion for descent algorithm. Thus we must have \begin{center}$\nabla f(x^{(k)})^T \Delta x^{(k)}<0$.\end{center}
\end{proof}
Geometrically, the search direction must make an acute angle with the negative gradient. We call such a direction a $\textit{descent direction}$.\\
Algorithm of $\textit{General Descent Method}$:\\
Given a starting point $x\in \textbf{dom}f$.

Repeat\\
1. Determine a descent direction $\Delta x$.\\
2. $\textit{Line search}$. Choose a step size $t>0$.\\
3. $\textit{Update}$. $x=x+t\Delta x$.\\
Until stopping criterion is satisfied.\\
The second step is called the $\textit{line search}$ since the selection of the step size $t$ determines where along the line $\{x+t\Delta x |t\in \mathbb R_{+}\}$ the next iterate will be.
\subsection {Newton's Step}
\begin{definition}If $f$ is convex and twice differentiable, then for $x\in \textbf{dom}f$, the vector \begin{center}$\Delta x=-\nabla^2 f(x)^{-1} \nabla f(x)$\end{center}is called the $\textit{Newton step}$.\end{definition}
\begin{prop*}Newton step is a descent direction.
\end{prop*}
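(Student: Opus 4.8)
The plan is to show that the Newton step $\Delta x = -\nabla^2 f(x)^{-1}\nabla f(x)$ satisfies the descent condition from Proposition \ref{prop:search}, namely $\nabla f(x)^T \Delta x < 0$, whenever $\nabla f(x) \neq 0$ (i.e. whenever $x$ is not already optimal by Theorem \ref{thm:unconstrained}). Since that proposition established that a direction is a descent direction exactly when it makes a negative inner product with the gradient, verifying this single inequality is all that is required.

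First I would substitute the definition of the Newton step directly into the inner product, obtaining
\begin{center}
$\nabla f(x)^T \Delta x = -\nabla f(x)^T \nabla^2 f(x)^{-1} \nabla f(x)$.
\end{center}
Writing $v = \nabla f(x)$, the right-hand side is $-v^T \nabla^2 f(x)^{-1} v$, a quadratic form in the inverse Hessian. The goal is then to argue that this quantity is strictly negative, which amounts to showing that $v^T \nabla^2 f(x)^{-1} v > 0$ for the nonzero vector $v$.

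The crux of the argument is establishing that $\nabla^2 f(x)^{-1}$ is positive definite. Here I would invoke Theorem \ref{thm:secondorder}, the second-order condition, which guarantees that $\nabla^2 f(x)$ is positive semidefinite since $f$ is convex. To pass to the inverse I need the Hessian to be actually invertible and positive definite, not merely semidefinite; this is implicitly required for the Newton step to even be well-defined (the definition uses $\nabla^2 f(x)^{-1}$). Given positive definiteness of $\nabla^2 f(x)$, its inverse is also positive definite, since the eigenvalues of the inverse are the reciprocals of the positive eigenvalues of the original matrix. I expect the main subtlety to lie precisely in this point: one must assume (or note that the definition of the Newton step presupposes) that the Hessian is nonsingular, so that positive semidefiniteness is upgraded to positive definiteness and the inverse exists as a positive definite matrix.

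Finally, with $\nabla^2 f(x)^{-1}$ positive definite and $v = \nabla f(x) \neq 0$, the quadratic form $v^T \nabla^2 f(x)^{-1} v$ is strictly positive, so
\begin{center}
$\nabla f(x)^T \Delta x = -v^T \nabla^2 f(x)^{-1} v < 0$.
\end{center}
By Proposition \ref{prop:search}, this confirms that the Newton step is a descent direction, completing the proof.
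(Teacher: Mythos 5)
Your argument takes essentially the same route as the paper: substitute $\Delta x = -\nabla^2 f(x)^{-1}\nabla f(x)$ into the inner product, conclude $\nabla f(x)^T \Delta x = -\nabla f(x)^T \nabla^2 f(x)^{-1}\nabla f(x) < 0$ from positive definiteness of the (inverse) Hessian, and then cite Proposition \ref{prop:search}. You are in fact somewhat more careful than the paper, which silently assumes both that $\nabla^2 f(x)$ is positive definite rather than merely positive semidefinite (Theorem \ref{thm:secondorder} only gives semidefiniteness, but invertibility is presupposed by the definition of the Newton step) and that $\nabla f(x) \neq 0$; your explicit treatment of these two points is a welcome tightening of the same proof.
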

\begin{proof}Note that $\nabla f(x)^T \Delta x=-\nabla f(x)^T \nabla^2 f(x)^{-1} \nabla f(x)<0$ by positive definiteness of $\nabla^2 f(x)$. Then the Newton step is a descent direction by proposition \ref{prop:search}.
\end{proof}
Another perspective at why we want to choose $\Delta x$ in such a manner. By theorem \ref{thm:unconstrained} for an unconstrained problem, we know that we want \begin{center}$\nabla f_0(x^{*})=0$.
\end{center}Now since we start with $x$ and we'd like to move towards $x^{*}$, we let $v=x^{*}-x$. We linearize the optimality condition near x we get\begin{center}$\nabla f(x^{*})=\nabla f(x+v) \approx \nabla f(x)+\nabla^2 f(x)v=0$.
\end{center}The solution to $v$ in the above equation, by simple algebra, is $v=-\nabla^2 f(x)^{-1} \nabla f(x)$, our Newton step. So the Newton step is what must be added to $x$ so that the linearized optimality condition holds. This suggest that our update $x+\Delta x$ would be a good approximation of $x^{*}$.\\
The algorithm for operating Newton's Method is the same as the algorithm of general descent method, with descent direction $\Delta x$ calculated as Definition 3. 
\subsection{Newton's method with equality constraints}
In this section we describe an extension of Newton's method to include equality constraints.\\
The optimization problem now is \begin{center}minimize $f_0(x)$\\
subject to $Ax=b$,
\end{center}where $A\in \mathbb R^{m\times n}$.
We aim to derive the formula that can solve the Newton step $\Delta x$ for this problem at the feasible point $x$. By second-order Taylor approximation near $x$ for $f_0$, \begin{center}$\hat{f_0}(x+v)\approx f_0(x)+\nabla f_0(x)^T v+\frac{1}{2}v^T \nabla^2 f_0(x) v$
\end{center}with variable $v$. Therefore, at each iteration step when we have a feasible ``guess" of $x$, by finding an appropriate $v$, we can keep on minimizing $f_0$ by minimizing $\hat{f_0}(x+v)$. Thus the Newton step $\Delta x$ should be the``appropriate" $v$ for the above approximation since the Newton step $\Delta x$ is what must be added to $x$ to solve the problem (minimize $\hat{f_0}$) when the quadratic approximation is used in place of $f_0$. We then can replace the original equality constrained problem with \begin{center}minimize $f_0(x)+\nabla f_0(x)^T v+\frac{1}{2}v^T \nabla^2 f_0(x) v$\\
subject to $A(x+v)=b$.
\end{center}Note that in the above problem, $x$ is already known and $v$ is our variable. The solution $v$ to the above quadratic problem is our $\Delta x$, the Newton step at $x$. 
\begin{theorem}The Newton step is $\Delta x$ in the solution to \begin{center}$\begin{bmatrix}
\nabla^2 f_0(x) & A^T\\
A & 0
\end{bmatrix}*\begin{bmatrix}\Delta x\\
w\end{bmatrix}=\begin{bmatrix}-\nabla f_0(x)\\
0\end{bmatrix}$,
\end{center}where $w$ is some vector in $\mathbb R^m$.
\end{theorem}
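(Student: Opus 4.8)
The plan is to recognize that the quadratic problem we have just set up,
\[
\text{minimize } f_0(x)+\nabla f_0(x)^T v+\tfrac{1}{2}v^T \nabla^2 f_0(x) v \quad \text{subject to } A(x+v)=b,
\]
is itself a convex optimization problem with equality constraints only, now in the variable $v$, and to apply the optimality criterion for equality-constrained problems stated earlier (the theorem giving $\nabla f_0(x^*)+A^T k=0$). The objective is convex in $v$ because it is a quadratic whose Hessian is $\nabla^2 f_0(x)$, which is positive semidefinite by the second-order condition (Theorem \ref{thm:secondorder}) since $f_0$ is convex; hence that criterion applies as an ``if and only if,'' and the $v$ that minimizes the approximation is exactly the Newton step $\Delta x$.

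First I would exploit the feasibility of the current iterate. Since $x$ already satisfies $Ax=b$, the affine constraint $A(x+v)=b$ collapses to $Av=0$, so the feasible set for $v$ is precisely the nullspace of $A$. Next I would compute the gradient of the objective with respect to $v$: writing $\phi(v)=f_0(x)+\nabla f_0(x)^T v+\tfrac{1}{2}v^T \nabla^2 f_0(x) v$ and using that the Hessian is symmetric, we obtain $\nabla \phi(v)=\nabla f_0(x)+\nabla^2 f_0(x)\,v$.

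By the equality-constraint optimality criterion applied to $\phi$, the point $v=\Delta x$ is optimal if and only if it is feasible, $A\Delta x=0$, and there exists a multiplier vector $w\in\mathbb R^m$ such that $\nabla \phi(\Delta x)+A^T w=0$, that is,
\[
\nabla^2 f_0(x)\,\Delta x+A^T w=-\nabla f_0(x).
\]
Finally I would assemble the two conditions $\nabla^2 f_0(x)\,\Delta x+A^T w=-\nabla f_0(x)$ and $A\Delta x=0$ into a single block linear system, which is precisely the matrix equation in the statement, with the first block row encoding the stationarity condition and the second block row encoding feasibility.

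The main obstacle here is conceptual rather than computational. One has to justify that the second-order Taylor model genuinely defines a convex program, so that the earlier optimality criterion can legitimately be invoked as a necessary-and-sufficient condition, and one must track carefully that the constraint reduces to $Av=0$ at a feasible $x$ so that the bottom-right block of the matrix is the zero matrix and the bottom entry of the right-hand side is $0$. Once these points are settled, the block-matrix form is merely a compact repackaging of the two optimality equations, and no further calculation is needed.
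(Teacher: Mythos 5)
Your proposal is correct and takes essentially the same approach as the paper's own proof: both apply the optimality criterion for equality-constrained problems to the quadratic model in the variable $v$, use feasibility of the current iterate $x$ to reduce the constraint to $Av=0$, compute the gradient $\nabla f_0(x)+\nabla^2 f_0(x)\,v$, and assemble the stationarity and feasibility conditions into the stated block linear system. Your explicit check that the quadratic model is itself convex (its Hessian $\nabla^2 f_0(x)$ being positive semidefinite) is a hypothesis the paper leaves implicit, but otherwise the two arguments coincide.
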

In fact, $w$ happens to be the value of the optimal dual variable for the primal quadratic problem. The details of proving why $w$ is the optimal dual variable can be found in \cite{Boyd} p.522-523. Since our focus reamins to be solving $\Delta x$, and sovling $w$ is only a byproduct of this process, plus why $w$ happens to be the optimal dual variable is complicate to prove and involves lots more details in duality, we won't go into it in details.
\begin{proof}By theorem \ref{eqaulity constraint}, our optimal condition is that there exists $w\in \mathbb R^m$ such that \begin{center}$\nabla \hat{f_0}(v)+A^Tw=0$ and $Av=0.$\end{center} Note we have $Av=0$ since both $A(x+v)=Ax=b$ and thus $Av=b-b=0$. In this case, $\nabla \hat{f_0}(v)=\nabla f_0(x)+\nabla^2 f_0(x) v$. If we rearrange the equation and put the conditions into a linear system, we get the above linear system in the theorem, where $v=\Delta x$. 
\end{proof}Comparing to the previous section, Newton's step without constraints, there are two key differences: the initial point must be feasible, since we want our "guess" of $x$ to be feasible (satisfying $Ax=b$) at each iteration step; and the Newton step $\Delta x$ is a feasible direction, i.e., $A\Delta x=0$.
\section{Interior Point Algorithm}
\subsection{Interior Point Method Set Up: Logarithmic Barrier and Central Path}
In this section we discuss $\textit{interior-point methods}$ for solving convex optimization problems that include inequality constraints. \begin{align*}&\text{minimize} &f_0(x)\\
&\text{subject to} &f_i(x)\leqslant 0, i=1, \cdots, m\\
& &Ax=b,\end{align*} 
where $f_0,\cdots , f_m: \mathbb R^n \rightarrow \mathbb R$ are convex and twice continuously differentiable, and $A\in \mathbb R^{p\times n}$, with $p<n$. We assume that the problem is solvable, i.e., an optimal $x^*$ exists. Let $f_0(x^*)=p^*$.\\

In section 7.1.1 we introduce the idea of indicator function that incorporates the inequality constraint functions into the objective function. Then in 7.1.2, we develop a even better indicator function which is smooth and differentiable which approximates the original indicator function. Together with section 7.2.1 construct the preliminary interior-point method based on the previous results and the error bound. Note that this algorithm is a one-time procedure in terms of finding the search direction, instead of repeating seeking a sequence of search direcitons that take us to the optimal value (minimizing sequence). Afterwards in section 7.3, we modify the algorithm by eliminating the necessity of good starting points and moderate accuracy. However, since our current method still requires a strictly feasible starting point, we introduce the phase I method, which helps us find an intial strictly feasible starting point given any initial guess.
\subsubsection{Logarithmic barrier function and central path}
Our goal is to approximately formulate the inequality constrained problem as an equality constrained problem to which Newton's method can be applied.\\
We first rewrite the problem to incorporate the inequality constraint functions into the objective function.
\begin{center}minimize $f_0(x)+\sum_{i=1}^m I(f_i(x))$\\
subject to $Ax=b$,
\end{center}
where $I: \mathbb R\rightarrow \mathbb R$ is the indicator function: \begin{center}\begin{equation}I(u)=\begin{cases} 0 & u\leqslant 0\\
\infty & u>0.\end{cases}\end{equation}
\end{center}If we take a step back and look at this formulated problem, notice that when all of the inequality function are satisfied, i.e., $f_i(x)\leqslant 0$, the objective function boils down to the original one $f_0(x)$, otherwise, if at least one inequality function is not satisfied, the objective function becomes infinity. \\
The above problem has no inequality constraints, but its objective function is not differentiable, so Newton's method cannot be applied, nor, technically, is it really a ``function".
\subsubsection{Logarithmic barrier}
We use a better indicator function \begin{center}$\hat{I}(u)=-\frac{1}{t}log(-u),$\end{center}where $t>0$ is a parameter that sets the accuracy of the approximation to approximate the indicator function $I$. Figure 4 demonstrates this approximation.
\begin{figure}[!h]
\begin{center}\includegraphics[width=3.5 in]{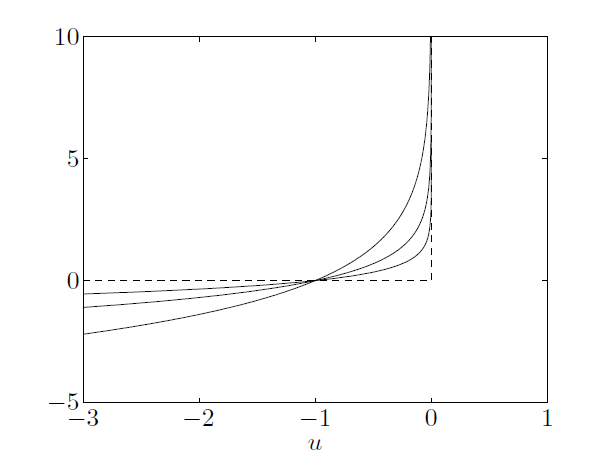}
\caption{The dashed line show the function $I(u)$, the indicator function in 7.1.1, and the solid lines show $\hat{I}(u)=-(1/t)log(-u)$, for t=0.5, 1, 2. The curve for $t=2$ shows the best approximation.}
\end{center}
\end{figure}
As we can see, $\hat{I}$ is convex (using second-order condition in $\mathbb R$) and nondecreasing, and undefined on the value $\infty$ for $u>0$. However, $\hat{I}$ is now smooth and differentiable for $u<0$. At $t$ increases, the approximation becomes more accurate.\\
The problem now becomes: \begin{center}minimize $f_0(x)+\sum_{i=1}^m -\frac{1}{t}log(-f_i(x))$\\
subject to $Ax=b$.
\end{center}Since the addition of convex functions is still convex, our objective function here is convex and now we can apply Newton's method.
\begin{definition}The function \begin{center}$\phi (x)=-\sum_{i=1}^m log(-f_i(x))$,
\end{center}with $\textbf{dom}\phi=\{x\in \mathbb R^n|f_i(x)< 0,  i=1,\cdots,m\}$ is called the logarithmic barrier for the problem.
\end{definition}Note the domain is the set of points that satisfy the inequality constraints strictly since the logarithmic barrier grows without bound if $f_i(x)\rightarrow 0_+$ for any $i$ no matter what value the positive parameter $t$ has.\\
By multivariable calculus (product rule, quotient rule and chain rule) we have \begin{align}\nabla \phi(x) = \sum_{i=1}^m \frac{1}{-f_i(x)}\nabla f_i(x)\end{align}
\begin{center}$\nabla^2 \phi (x)=\sum_{i=1}^m \frac{1}{f_i(x)^2}\nabla f_i(x) \nabla f_i(x)^T+\sum_{i=1}^m\frac{1}{-f_i(x)}\nabla^2 f_i(x)$.\end{center}
\subsubsection{Central path}
Our current optimization problem has the form \begin{center}minimize $f_0(x)+\frac{1}{t}\phi(x)$\\
subject to $Ax=b$.
\end{center}
This is equivalent to \begin{center}minimize $t f_0(x)+\phi(x)$\\
subject to $Ax=b$.
\end{center}since $t$ is a parameter and thus the problem has the same minimizers. For $t>0$, we define $x^*(t)$ as the solution of the above problem.\begin{definition}The central path is defined as the set of points $x^*(t),t>0$, which we call the central points.\end{definition}
\begin{theorem}The central points are characterized by the following necessary and sufficient conditions: $x^*(t)$ is strictly feasible, i.e., satisfies \begin{center}$A x^*(t)=b, f_i(x^*(t))<0, i=1,\cdots,m,$
\end{center} and there exists $\hat{v}\in \mathbb R^p$ such that \begin{align*}0 &= t\nabla f_0(x^*(t))+\nabla\phi(x^*(t))+A^T \hat{v}\\
&= t\nabla f_0(x^*(t))+\sum_{i=1}^m \frac{1}{-f_i(x^*(t))}\nabla f_i(x^*(t))+A^T \hat{v}
\end{align*}
\label{thm:centralpoints}
\end{theorem}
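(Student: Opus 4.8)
The plan is to recognize this theorem as a direct application of the optimality criterion for equality-constrained problems (Theorem~\ref{eqaulity constraint}, proven in \cite{Boyd}). The central point $x^*(t)$ is by definition the minimizer of the problem: minimize $tf_0(x)+\phi(x)$ subject to $Ax=b$. I would first note that the objective $tf_0(x)+\phi(x)$ is convex and differentiable on its domain, since $f_0$ is convex and we have already observed that $\phi$ is a sum of convex logarithmic barrier terms. Hence the equality-constrained optimality criterion applies: $x^*(t)$ is optimal if and only if it is feasible and the gradient of the objective, plus a linear combination of the rows of $A$, vanishes.

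The key steps, in order, are as follows. First I would establish strict feasibility: any point in $\textbf{dom}\,\phi$ automatically satisfies $f_i(x)<0$ for all $i$ (by the definition of the domain of the logarithmic barrier), and since $x^*(t)$ is constrained by $Ax=b$, we get $Ax^*(t)=b$ as well. This gives the first condition. Second, I would apply Theorem~\ref{eqaulity constraint} to the objective $g(x)=tf_0(x)+\phi(x)$: the theorem guarantees the existence of a vector $\hat{v}\in\mathbb{R}^p$ with
\begin{center}$\nabla g(x^*(t))+A^T\hat{v}=0.$\end{center}
Third, I would simply compute $\nabla g(x)=t\nabla f_0(x)+\nabla\phi(x)$ and substitute the explicit formula for $\nabla\phi$ recorded in equation~(7) above, namely $\nabla\phi(x)=\sum_{i=1}^m \frac{1}{-f_i(x)}\nabla f_i(x)$. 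This turns the abstract optimality condition into the two displayed equalities in the statement.

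Since the theorem claims a necessary \emph{and} sufficient characterization, I would rely on the fact that Theorem~\ref{eqaulity constraint} is itself an ``if and only if'' statement, so both directions come for free once the objective is shown to be convex and differentiable. The main obstacle is not any deep argument but rather a bookkeeping point: Theorem~\ref{eqaulity constraint} as stated treats $A$ as an $m\times n$ matrix, whereas here $A\in\mathbb{R}^{p\times n}$, so the multiplier $\hat{v}$ lives in $\mathbb{R}^p$; I must make sure the dimensions are tracked consistently. A secondary subtlety is verifying that the minimizer $x^*(t)$ actually lies in the open domain $\{x\mid f_i(x)<0\}$ rather than on its boundary---this is exactly where the ``barrier'' property matters, since $\phi(x)\to\infty$ as any $f_i(x)\to 0^-$, which I would invoke to guarantee the solution stays strictly feasible and the gradient expression is well-defined.
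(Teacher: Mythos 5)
Your proposal is correct and follows essentially the same route as the paper, which likewise proves this theorem by applying the equality-constrained optimality criterion (Theorem~\ref{eqaulity constraint}) to the problem of minimizing $tf_0(x)+\phi(x)$ subject to $Ax=b$ and substituting the gradient formula for $\phi$ from equation~(7). In fact your write-up is more careful than the paper's one-sentence proof, since you explicitly address strict feasibility via the domain of the barrier and the dimension bookkeeping for $\hat{v}\in\mathbb{R}^p$.
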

\begin{proof}The proof of the this theorem follows directly from theorem 12 and equation 7, as the optimization problem now only has equality constraints.
\end{proof}
\subsection{The preliminary barrier method}
\subsubsection{The preliminary barrier method}
The following theorem helps us define the error bound of the solution using the interior-point method. 
\begin{theorem}$f_0(x^*(t))-p^*\leqslant m/t$. \label{dual feasible}
\end{theorem}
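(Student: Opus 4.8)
The plan is to turn the central point $x^*(t)$ into a feasible point for the dual problem and then invoke the weak-duality bound of Theorem \ref{thm:dual}. The guiding observation is that the optimality condition characterizing the central path in Theorem \ref{thm:centralpoints} is, up to rescaling by $t$, exactly the Lagrangian stationarity condition for a cleverly chosen pair of dual variables.

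First I would define the candidate dual variables
\[
\lambda_i^*(t) = \frac{1}{-t f_i(x^*(t))}, \qquad v^*(t) = \frac{\hat v}{t},
\]
where $\hat v$ is the vector produced by Theorem \ref{thm:centralpoints}. Since $x^*(t)$ is strictly feasible we have $f_i(x^*(t)) < 0$, so each $\lambda_i^*(t) > 0$; in particular $\lambda^*(t) \geqslant 0$, which is exactly the sign condition needed before the weak-duality bound can be applied.

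Next I would divide the stationarity equation of Theorem \ref{thm:centralpoints} by $t$ to obtain
\[
\nabla f_0(x^*(t)) + \sum_{i=1}^m \lambda_i^*(t)\, \nabla f_i(x^*(t)) + A^T v^*(t) = 0,
\]
which is precisely $\nabla_x L(x^*(t), \lambda^*(t), v^*(t)) = 0$. Because $L(\cdot, \lambda^*(t), v^*(t))$ is convex in $x$ (a nonnegative combination of the convex functions $f_i$ together with an affine term), a vanishing gradient forces $x^*(t)$ to be a global minimizer of the Lagrangian over $x$, so that $g(\lambda^*(t), v^*(t)) = L(x^*(t), \lambda^*(t), v^*(t))$.

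Finally I would evaluate this Lagrangian. The equality term $v^*(t)^T(Ax^*(t)-b)$ vanishes because $x^*(t)$ is feasible, and each inequality term collapses to a constant,
\[
\lambda_i^*(t)\, f_i(x^*(t)) = \frac{f_i(x^*(t))}{-t f_i(x^*(t))} = -\frac{1}{t},
\]
so the whole sum equals $-m/t$. Hence $g(\lambda^*(t), v^*(t)) = f_0(x^*(t)) - m/t$, and Theorem \ref{thm:dual} gives $f_0(x^*(t)) - m/t \leqslant p^*$, which rearranges to the claim. The main obstacle is recognizing that the centering optimality condition is a disguised Lagrangian stationarity condition and verifying that this stationary point is genuinely a \emph{global} minimizer of the Lagrangian (this is where convexity of $L$ is essential); the arithmetic simplifications are routine once the dual variables have been normalized correctly.
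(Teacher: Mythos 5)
Your proposal is correct and follows essentially the same route as the paper: it defines the same scaled dual variables $\lambda_i^*(t)=-1/(t f_i(x^*(t)))$, $v^*(t)=\hat v/t$, verifies dual feasibility from the central-path stationarity condition of Theorem \ref{thm:centralpoints}, evaluates the Lagrangian to get $g(\lambda^*(t),v^*(t))=f_0(x^*(t))-m/t$, and closes with weak duality (Theorem \ref{thm:dual}), exactly as the paper does in its proof together with the addendum. If anything, your write-up is slightly more careful than the paper's, since you explicitly invoke convexity of $L(\cdot,\lambda^*(t),v^*(t))$ to upgrade the vanishing gradient to a global minimum, a step the addendum leaves implicit.
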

\begin{proof}Define \begin{center}$\lambda_i^*(t)=-\frac{1}{tf_i(x^*(t))}, i=1,\cdots, m, v^*(t)=\hat{v}/t.$\end{center}
We assume that $\lambda^*(t),v^*(t)$ is dual feasible. The details of this proof, as it requires some basic knowledge of duality, will be attached in the addendum for refference.
We now calculate \begin{align*}g(\lambda^*(t), v^*(t)) &= f_0(x^*(t))+\sum_{i=1}^m \lambda_i^*(t) f_i(x^*(t))+v^*(t)^T(Ax^*(t)-b)\\
&=f_0(x^*(t))-m/t. 
\end{align*}
The last step is because the sum of the second term adds up to $-m/t$, by construction, and the third term is simply $0$.
Therefore since by theorem \ref{thm:dual} \begin{center}$g(\lambda^*(t),v^*(t))\leqslant p^*$,\end{center}we have \begin{align*}f_0(x^*(t))-p^* &\leqslant f_0(x^*(t))-g(\lambda^*(t), v^*(t))\\
&=m/t.
\end{align*}
\end{proof}
This theorem gives us a bound on the accuracy of the $x^*(t)$, the error bound is $m/t$.\\ Using the error bound, we are able to define the parameter $t$, given a specific error tolerance.

With this theorem, we can derive the barrier method with a guranteed specified accuracy $\epsilon,$ by picking $t=m/\epsilon$ and solving the equality constrained problem based on the minimization problem in section 6.3:
\begin{center}minimize  $(m/\epsilon)f_0(x)+\phi(x)$\\
subject to  $Ax=b.$
\end{center}using Newton's method with equality constraints.\\

\subsubsection{Application of the preliminary barrier method}
We will use the object function that has been proven convex in Example \ref{exa:object} as the object function in the following convex optimization problem. For the sake of simplicity, we will construct some simply constraint functions instead of those have been shown above.
\begin{example}\begin{align*}\text{minimize } & {x_1}^2+{x_2}^2+{x_3}^2-{x_1}{x_2}-{x_2}{x_3}\\
\text{subject to } &x_1+x_2\leqslant 200\\
&x_1+5 x_2+10 x_3\leqslant 8000\\
&-10x_2-x_3\leqslant 5000\\
& x_1+x_3=400\\ 
\end{align*}
\end{example}This problem is a convex optimization problem. One can check this by testing the convexity of each constraint function using the second-order conditions and since it's purely algebra, we will skip the details of calculation.\\

Before we apply the preliminary barrier method, we first calculate the answer of this problem. Note that the problem can be reduced to a two-variable optiimazation problem using the equality constraint, and then using mathematica, we can get our solution to the problem:
\begin{align*}x_1&=400/3\\
x_2 &= 200/3\\
x_3 &=800/3
\end{align*}
\begin{figure}[!h]
\begin{center}\includegraphics[width=3.5 in]{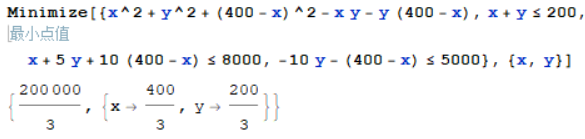}
\caption{Answer to Example 6}
\end{center}
\end{figure}
Note the solution is on the first inequality constraint while satisfying the equality constraint.\\

We then apply the preliminary barrier method for this optimization problem. In this example, our inequality constraint functions, by definition, are $f_1(x)=x_1+x_2-200$, $f_2(x)=x_1+5x_2+10x_3-8000$, and $f_3(x)=-10x_2-x_3-5000$. Thus the logartihmic barrier function is then \begin{align*}\phi(x)&=-(\log(-f_1(x))+\log(-f_2(x))+\log(-f_3(x)))\\
&= -\log(-x_1-x_2+200)-\log(-x_1-5x_2-10x_3+8000)-\log(10x_2+x_3+5000)\\.
\end{align*}
We have our equality constraint 
\[
\begin{bmatrix}
1 &0 &1\\
\end{bmatrix}
\begin{bmatrix}
x_1\\
x_2\\
x_3
\end{bmatrix}
=
\begin{bmatrix}
400\\
\end{bmatrix}.
\]
We have three inequalities and thus $m=3$, and suppose we want the accuracy to be $\epsilon= 10^{-1}$. Then according to section 7.2.1, we will then use Newton's method with equality constraints to solve the optimization problem:
\begin{center} minimize $3*10^{1} ({x_1}^2+{x_2}^2+{x_3}^2-{x_1}{x_2}-{x_2}{x_3})-\log(-x_1-x_2+200)-\log(-x_1-5x_2-10x_3+8000)-\log(10x_2+x_3+5000)$\\ \end{center}
subject to \[
\begin{bmatrix}
1 &0 &1
\end{bmatrix}
\begin{bmatrix}
x_1\\
x_2\\
x_3
\end{bmatrix}
=
\begin{bmatrix}
400\\
\end{bmatrix}.
\]
Recall that at each per iteration within the Newton's method, we solve \begin{center}$\begin{bmatrix}
\nabla^2 f(x) & A^T\\
A & 0
\end{bmatrix}\begin{bmatrix}\Delta x\\
w\end{bmatrix}=\begin{bmatrix}-\nabla f(x)\\
0\end{bmatrix}$,
\end{center} to find the direction $\Delta x$. Typically, we will use Brent's method \cite{Brent} for the Line search step, in which we choose the appropriate step size $t$ in each iteration when using Newton's method with equality constraints. Here is the result from Matlab:\\
\begin{figure}[!h]
\begin{center}\includegraphics[width=3.5 in]{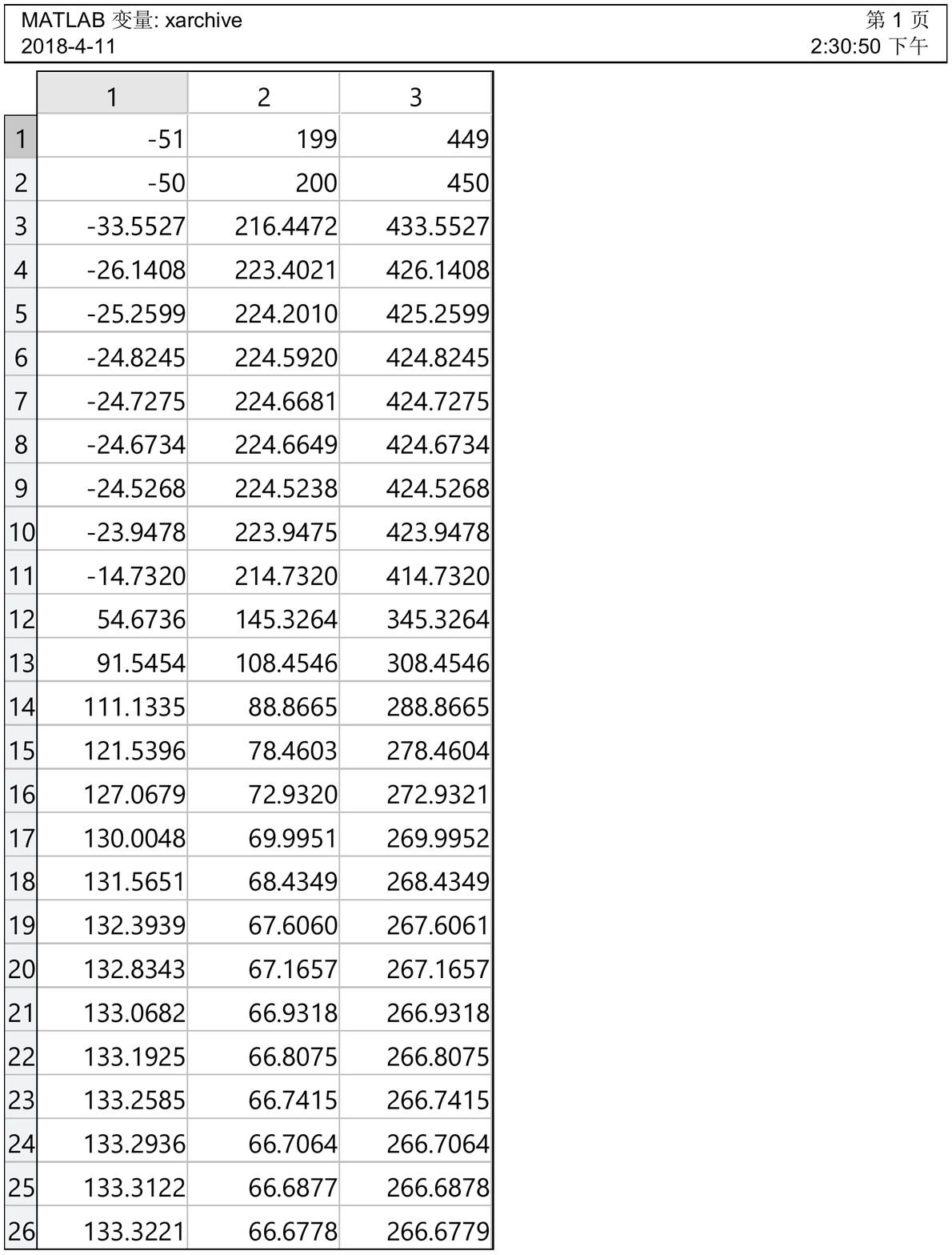}
\caption{Matlab Answer to Example 6 with preliminary barrier method, $\epsilon =10^{-1}$}
\end{center}
\end{figure}
\begin{figure}[!h]
\begin{center}\includegraphics[width=3.5 in]{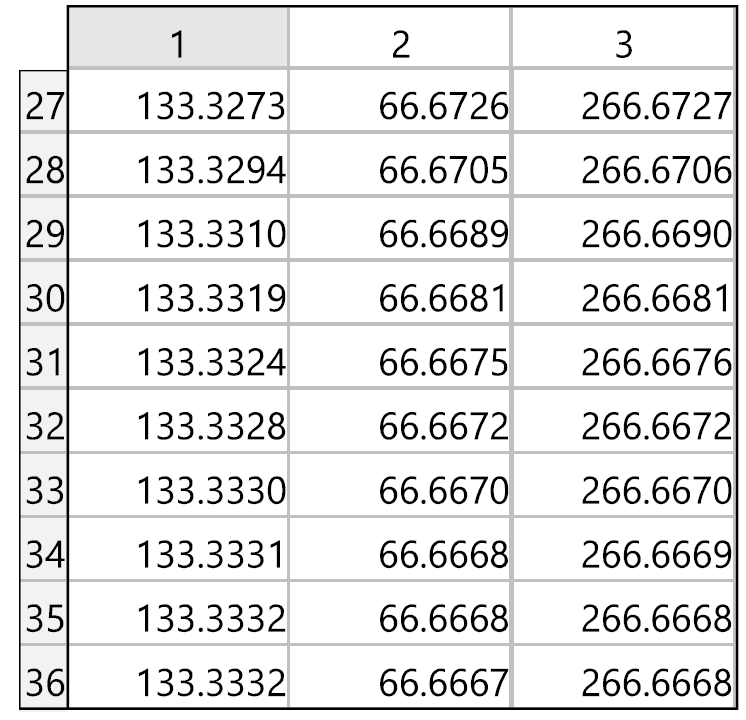}
\caption{Cont: Matlab Answer to Example 6}
\end{center}
\end{figure}
Thus we indeed get the correct numerical solution that we want after 36 iterations. We can see that throughout the iteration process, we are always heading towards the right direction, the iteration points are always strictly feasible. However, the preliminary method does not always work. If we adjust our accuracy to $10^{-10}$, figure 8 demonstrates the mal-function of this algorithm. As we can see, the solution is trapped around $\{0, 200, 400\}$, which is not the desired answer.
\begin{figure}[!h]
\begin{center}\includegraphics[width=3.5 in]{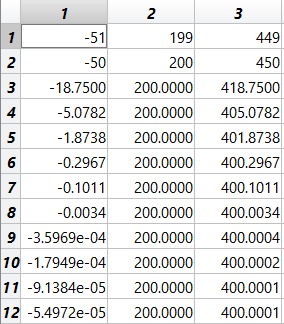}
\caption{Cont: Matlab Answer to Example 6 with accuracy $10^{-10}$}
\end{center}
\end{figure}\\

In fact, if we analyze the preliminary barrier method for solving the equality constrainted minimization problem: \begin{center}minimize  $(m/\epsilon)f_0(x)+\phi(x)$\\
subject to  $Ax=b,$
\end{center}we should see that theoretically, there are at least two drawbacks for this algorithm. First, the preliminary barrier method requires to have good starting points. The initial guess need to be strictly feasible, otherwise the logarithmic barrier function won't be defined, and when the starting point is too far away from some inequality constraint function, the logarithmic barrier function might have a huge error, as our logarithmic approximation will take it to a large negative value. Second, this algorithm requires a moderate accuracy (i.e., $\epsilon$ not too small), otherwise the term $m/\epsilon$ blows up so that $(m/\epsilon) f_0(x)$ outweights the logarithimic barrier function. 
\subsection{The Interior-Point Method: barrier method}
Because of the drawbacks discussed above, in this section, we do a simple extension on the preliminary barrier method based on solving a sequence of unconstrained minimization problems using the last point as the starting point for the next unconstrained minimization problem. Computationally, we compute $x^*(t)$ for a sequence of increasing values of $t$, until $t\geqslant m/\epsilon$, which gurantees that we have $\epsilon$ accuracy. In this case, we do not require the initial guess to be a good guess.\\

Barrier method:\\
Given strictly feasible $x, t^{(0)}>0, u>1, \epsilon >0$.\\
Repeat\\
1. $\textit{Centering step}$: Compute $x^*(t)$ by minimizing $tf_0+\phi,$ subject to $Ax=b$, starting at x.\\
2. $\textit{Update}$: $x=x^*(t)$.\\
3. $\textit{Stopping criterion}: \textbf{quit } \text{if } m/t<\epsilon$.\\
4. $\textit{Increase t}: t=ut$.\\
$\textbf{Choice of u}$:\\
The choice of $u$ involves a trade-off. If $u$ is large, after each centering step, $t$ increases a large amount, so that current iterate probably won't be a good approximation of the next iterate. Thus there would be more iteration when doing the minimization of the equality constraint optimization problem. However, on the other hand, we will reach our stopping criterion more quickly.\\
$\textbf{Choice of } t^{(0)}$:\\
If $t^{(0)}$ is large, the first centering step will require too many iterations. If $t^{(0)}$ is too small, the algorithm will need extra several iterations of the centering step. Typically, we will begin with $t^{(0)}=10$ and if the first centering step runs too many iterations, we will decrease $t^{(0)}$, until we feel comfortable with the number of iterations in the first centering step.\\
$\textbf{Convergence analysis}$:\\
It's straightforward that at each iteration, we are approaching our designated accuracy by dividing $m$ by $u$. Thus the stopping criterion we check would be $m/t, m/ut, m/u^2 t, \cdots$. Thus the duality gap after the initial centering step, and k iterations, would be $m/(u^k t^{(0)})$. Thus we acheive the desired accuracy $\epsilon$ after \begin{center}$\frac{log(m/\epsilon t^{(0)})}{log u}$\end{center} steps, plus the initial centering step.\\ Note the above idea is based on theorem 17, in which we proved the error bound at each iteration.
\subsection{The application of the Interior-point method to Example 5}
We apply the Interior-point algorithm to Example 6. Let $t^{(0)}=10$ and $u=10$. We first test the method with accuracy $\epsilon=10^{-2}$. As we can see, we indeed get the desired correct solution, and we run two Interior-point iterations (Namely the first is $3/10$, and the second one $3/10^2$). In the frist Interior-point iteration, we ran 16 Newton iterations and in the second Interior-point iteration, we ran 7 Newton iterations, as illustrated in Figure 9.
\begin{figure}[!h]
\begin{center}\includegraphics[width=3.5 in]{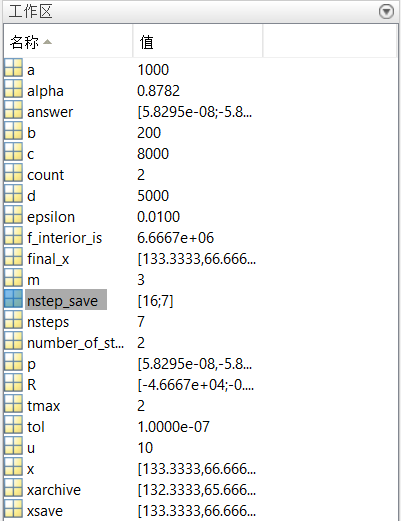}
\caption{Interior-point Algorithm for Example 6 with accuracy $10^{-2}$}
\end{center}
\end{figure}\\

We now switch to accuracy with $\epsilon=10^{-10}$, which the preliminary method cannot solve. Again, by Figure 10, we get the desired correct solution (Figure 10), and we ran 10 Interior-point iterations. The number of Newton iterations in each Interior-point iteration is respectively, 16, 7, 5, 3, 1, 1, 1, 1, 1, 1. As we can see, when $\epsilon$ is getting smaller and smaller, the answer is already pretty accurate, and thus not so many Newton iterations required in each Interior-point step.
\begin{figure}[!h]
\begin{center}\includegraphics[width=3.5 in]{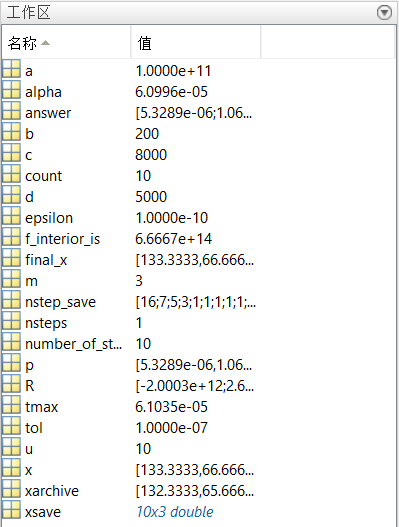}
\caption{Interior-point Algorithm for Example 6 with accuracy $10^{-10}$}
\end{center}
\end{figure}\\
\subsection{Phase I method}
The barrier method requires a strictly feasible starting point $x^{(0)}$ (it does not require it to be a good guess). When we do not have such a starting point, we need a preliminary stage, $\textit{Phase I},$ in which we derive a strictly feasible initial point.\\

We aim to find $x\in \mathbb R^n$ that satisfies the strict feasibility of the problem, i.e., \begin{center}$f_i(x)< 0, i=1,\cdots m, Ax=b$,
\end{center} with $f_i: \mathbb R^n \rightarrow \mathbb R$ convex, continuous second differentiable. Assume we have a point $x^{(0)}\in \textbf{dom} f_1 \cap \cdots \cap \textbf{dom} f_m, Ax^{(0)}=b$. \\
We form the following optimization problem: \begin{align*}\text{minimize } &s\\
\text{subject to }&f_i(x)\leqslant s, i=1,\cdots, m\\
&Ax=b
\end{align*}where $x\in \mathbb R^n, s\in \mathbb R$. Note that if we have the minimal value of $s$ less than 0, then we know that every inequality constraint is strictly below $0$, as $s$ serves as an upper bound for the inequality constraints.\\

Now, we can apply the barrier method to solve the above problem, if we are confident that this problem is strictly feasible, which is the requirement for applying the barrier method.
\begin{prop}Given $x^{(0)}$ as starting point for $x$, we are able to pick an intial guess for $s$ such that $s^{(0)}$ is strictly feasible for the problem \begin{align*}\text{minimize } &s\\
\text{subject to }&f_i(x)\leqslant s, i=1,\cdots, m\\
&Ax=b
\end{align*}
\end{prop}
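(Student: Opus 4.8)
The plan is to exploit the fact that the only constraints on $s^{(0)}$ are the $m$ strict inequalities $f_i(x^{(0)}) < s^{(0)}$, since the equality constraint $Ax = b$ involves only $x$ and is already met by the given starting point $x^{(0)}$. So the entire task reduces to choosing a single scalar $s^{(0)}$ large enough to dominate every $f_i(x^{(0)})$.

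First I would observe that, by the standing assumption, $x^{(0)} \in \textbf{dom} f_1 \cap \cdots \cap \textbf{dom} f_m$, so each value $f_i(x^{(0)})$ is a well-defined finite real number. Since there are only finitely many constraints, the quantity $M = \max_{i=1,\ldots,m} f_i(x^{(0)})$ exists and is finite. Next I would simply set
\begin{center}$s^{(0)} = M + 1 = \max_{i=1,\ldots,m} f_i(x^{(0)}) + 1$.\end{center}
The role of the $+1$ is to guarantee \emph{strict} inequality: for each $i$ we have $f_i(x^{(0)}) \leqslant M < M + 1 = s^{(0)}$, so $f_i(x^{(0)}) < s^{(0)}$ holds for all $i = 1, \ldots, m$.

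Finally I would verify that the pair $(x^{(0)}, s^{(0)})$ meets the strict feasibility requirement of the Phase I problem: the strict inequalities $f_i(x^{(0)}) < s^{(0)}$ hold by the construction above, and $Ax^{(0)} = b$ holds by hypothesis on $x^{(0)}$. Hence $(x^{(0)}, s^{(0)})$ is strictly feasible, as claimed.

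There is essentially no hard part here; the statement is a short existence-by-construction argument. The only point that requires a moment of care is ensuring the inequalities are \emph{strict} rather than merely satisfied with equality, which is exactly why I would shift the maximum upward by a positive amount instead of setting $s^{(0)} = M$. Any strictly positive offset would work equally well, and finiteness of the maximum is assured because $x^{(0)}$ lies in the domain of every $f_i$ and there are only finitely many of them.
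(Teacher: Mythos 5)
Your proof is correct and takes essentially the same approach as the paper's: both choose $s^{(0)}$ strictly above $\max_{i=1,\ldots,m} f_i(x^{(0)})$ so that all inequality constraints hold strictly while $Ax^{(0)}=b$ is inherited from the hypothesis. Your version merely makes the choice concrete ($M+1$) and spells out the finiteness of the maximum, which the paper leaves implicit.
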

\begin{proof}Assume that we are given $x^{(0)}$ as starting point for $x$. We can simply choose $s^{(0)}$ such that \begin{center}$s^{(0)}>\text{max}_{i=1,\cdots,m} f_i(x^{(0)})$.
\end{center}Then for this $s^{(0)}$, the problem must be strictly feasible since $f_i(x^{(0)})< s^{(0)}$ for all $i=1,\cdots,m$.
\end{proof}
We can thus apply the barrier method to solve the problem and this step is called the phase I optimization problem.\\

There are two cases depending on the sign of the optimal value $\bar{p}^*$ of the phase I problem.\\
1. If $\bar{p}^*<0$, then the original problem has a strictly feasible starting point $x^{(0)}$. We then just need to determine when $s<0$ for $x$ in the Phase I problem.\\
2. If $\bar{p}^*\geqslant0$, then the original problem does not have a strictly feasible starting point $x^{(0)}$ and the original problem is not feasible.\\
\section{Addendum}
The following proof is for proving  $\lambda^*(t),v^*(t)$ is dual feasible in Theorem \ref{dual feasible}
\begin{proof}Recall that $\lambda^*(t),v^*(t)$ is dual feasible if $\lambda^* >0$ and they are in the domain of the dual function $g:\mathbb R^m \times \mathbb R^p\rightarrow \mathbb R$, where the Lagrangian $L(x,\lambda,v)$ is minimized.\\
It's clear that $\lambda^*(t)>0$ because $f_i(x^*(t))<0, i=1,\cdots, m.$ Now we aim to prove that the Lagrangian is minimized.\\
By theorem \ref{thm:centralpoints}, if we divide the equation of last line by $t$ on both sides, we get \begin{align}\nabla f_0(x^*(t))+\sum_{i=1}^m \lambda_i^*(t) \nabla f_i(x^*(t))+A^Tv^*(t)=0.
\end{align}For $\lambda=\lambda^*(t)$ and $v=v^*(t)$, $x^*(t)$ is then strictly feasible and the Lagrangian \begin{center}$L=f_0(x)+\sum_{i=1}^m \lambda_i f_i(x)+v^T(Ax-b)$\end{center}is minimized by $x^*(t)$ as its first derivative is $0,$ by equation 8. Thus $\lambda^*(t),v^*(t)$ is a dual feasible pair.
\end{proof}

\newpage


\begin{thebibliography}{9}
\bibitem{Boyd} Stephen Boyd \& Lieven Vandenberghe.``Convex Optimization",   Cambridge University Press, Inc, (2009).
\bibitem{Yurii} Nesterov, Yurii, ``Introductory Lectures on Convex Optimization``, Springer, US, 2004.
\bibitem{Analysis} Robert Manning, ``Analysis I Midterm 2 Take home part Question 5``, Springer, US, 2004.
\bibitem{Brent} Van Wijingaarden-Deckker, ``Brent's Method ``,  http://www.aip.de/groups/soe/local/numres/bookcpdf/c9-3.pdf.
\bibitem{James} James Renegar, ``A Mathematical View of Interior-Point Methods in Convex Optimization``, SIAM, US, 2001.
\end{thebibliography}
\end{document}